\numberwithin{equation}{section}
\newtheorem{lemma}[equation]{Lemma}
\newtheorem{prop}[equation]{Proposition}
\newtheorem{cor}[equation]{Corollary}
\newtheorem{thm}[equation]{Theorem}
\theoremstyle{definition}
\newtheorem{defn}[equation]{Definition}
\newtheorem{example}[equation]{Example}
\newtheorem{notation}[equation]{Notation}
\newtheorem{algorithm}[equation]{Algorithm}
\theoremstyle{remark}
\newtheorem{remark}[equation]{Remark}
\newtheorem*{claim*}{Claim}
\newtheorem*{case*}{Case}
\newcommand{\Tate}{{\mathbf{T}}}
\newcommand{\cC}{\mathcal{C}}
\newcommand{\m}{\mathfrak m}
\newcommand{\PP}{\mathbb P}
\newcommand{\PPv}{\mathbb P_{\on{var}}}
\newcommand{\bA}{\mathbb A}
\newcommand{\A}{\bA}
\newcommand{\ZZ}{\mathbb Z}
\newcommand{\im}{\operatorname{im}}
\newcommand{\Proj}{\operatorname{Proj}}
\newcommand{\Tor}{\operatorname{Tor}}
\newcommand{\Hom}{\operatorname{Hom}} 
\newcommand{\cO}{{\mathcal O}}
\newcommand{\kk}{{\bf k}}
\newcommand{\rank}{\operatorname{rank}}
\newcommand{\F}{\FF}
\newcommand{\defi}[1]{\textsf{#1}} 
\newcommand{\beq}{\begin{displaymath}}
\newcommand{\eeq}{\end{displaymath}}
\def\reg{\operatorname{reg}}
\def\nc{\newcommand}
\def\on{\operatorname}
\nc{\Q}{\mathbb{Q}}
\nc{\RR}{\mathbf{R}}
\nc{\LL}{\mathbf{L}}
\nc{\xra}{\xrightarrow}
\nc{\xla}{\xleftarrow}
\def\a{\mathbf{a}}
\def\om{\omega}
\def\DM{\operatorname{DM}}
\def\th{\on{th}}
\def\F{\mathcal{F}}
\def\coker{\on{coker}}
\nc{\into}{\hookrightarrow}
\nc{\onto}{\twoheadrightarrow}
\nc{\OO}{\mathcal{O}}
\nc{\Z}{\mathbb{Z}}
\nc{\cA}{\mathcal{A}}
\nc{\w}{\widehat}
\nc{\End}{\on{End}}
\nc{\res}{\frac{1}{x_0x_1}}
\nc{\tF}{\widetilde{F}}
\nc{\tG}{\widetilde{G}}
\nc{\tf}{\widetilde{f}}
\nc{\Com}{\on{Com}}
\nc{\G}{\mathbb{G}}
\nc{\cG}{\mathcal{G}}
\nc{\cE}{\mathcal E}
\nc{\cF}{\mathcal F}
\nc{\cR}{\mathcal R}
\nc{\cD}{\mathcal D}
\nc{\cB}{\mathcal B}
\nc{\cT}{\mathcal T}
\nc{\cL}{\mathcal L}
\nc{\bM}{\mathbf M}
\nc{\bN}{\mathbf N}
\nc{\U}{\mathbf U}
\nc{\BM}{\mathbf B \mathbf M}
\nc{\Dsg}{\on{D}_{\on{sg}}}
\nc{\fC}{\mathcal{C}}
\nc{\fG}{\mathcal{G}}
\nc{\N}{\mathbb{N}}
\nc{\del}{\partial}
\nc{\cone}{\on{cone}}
\nc{\D}{\on{D}_{\on{diff}}}
\nc{\DMb}{\on{D}^b_{\DM}}
\nc{\Db}{\on{D}^{\on{b}}}
\nc{\Kb}{\on{K}^{\on{b}}}
\nc{\fm}{\mathfrak{m}}
\nc{\Flag}{\on{Flag}}
\nc{\DMmin}{\DM_{\on{min}}}
\nc{\Ddiff}{\on{D}_{\on{diff}}}
\nc{\Dbdiff}{\on{D}^\on{b}_{\on{diff}}}
\nc{\wO}{\widehat{\OO}}
\nc{\wT}{\widehat{T}}
\nc{\from}{\leftarrow}
\nc{\wLL}{\widetilde{\LL}}
\nc{\augCech}{\widetilde{\cC}}
\nc{\Fold}{\on{Fold}}
\nc{\Ext}{\on{Ext}}
\nc{\RHom}{\on{RHom}}
\nc{\FF}{\mathbf{F}}
\nc{\Comper}{\Com_{\on{per}}}
\nc{\Unfold}{\on{Unfold}}
\nc{\intHom}{\underline{\Hom}}
\nc{\Ex}{\on{Ex}}
\nc{\tg}{\widetilde{g}}
\nc{\B}{\mathcal{B}}
\nc{\K}{\mathcal{K}}
\nc{\kos}{\on{Kos}}
\nc{\Perf}{\on{Perf}}
\nc{\tR}{\widetilde{\cR}}
\nc{\X}{\mathcal{X}}
\nc{\Cl}{\on{Cl}}
\nc{\fU}{\mathcal{U}}
\nc{\bU}{\mathbf U}
\def\co{\colon}
\def\ce{\coloneqq}
\nc{\st}{\on{st}}
\nc{\coh}{\on{coh}}
\def\D{\mathcal{D}}
\nc{\tU}{\U}
\nc{\bC}{\mathbf{C}}
\nc{\aux}{\on{aux}}
\def\phi{\varphi}
\def\T{\mathbf{T}}
\nc\Dsing{\on{D}^{\on{sing}}}
\def\k{\mathbf{k}}
\def\e{\epsilon}
\def\epsilon{\varepsilon}
\def\w{\mathbf w}
\title{Computing weighted sheaf cohomology using noncommutative differential modules}
\author{Michael K. Brown}
\address{Department of Mathematics and Statistics, Auburn University, Auburn, AL}
\email{\texttt{mkb0096@auburn.edu}}
\newcommand{\Manoa}{M\=anoa}
\newcommand{\Hawaii}{Hawai\kern.05em`\kern.05em\relax i}
\author{Daniel Erman}
\address{Department of Mathematics, University of \Hawaii \ at \Manoa, Honolulu, HI}
\email{\texttt{erman@hawaii.edu}}
\def\MR#1{}
  \thanks{The authors were supported by NSF grants
DMS-2200469 and DMS-2302373, respectively}
\thanks{\emph{2020 Mathematics Subject Classification.} 14Q20, 13D02}
\date{\today}
\begin{document}
\begin{abstract}
We describe a novel method for computing sheaf cohomology over weighted projective  spaces and stacks using exterior algebra and differential module techniques, generalizing an algorithm due to Eisenbud-Fl\o ystad-Schreyer over projective space. 
\end{abstract}
\maketitle
\section{Introduction}

We develop a novel algorithm for computing sheaf cohomology over weighted projective stacks, and weighted projective spaces, using exterior algebra and differential module techniques.  The specific computational goal is the following: given a coherent sheaf $\cF$ on a weighted projective stack or space, we want to compute all sheaf cohomology groups of $\cF(j)$ for all $j$ in some specified finite range.
Our approach generalizes work of Eisenbud-Fl\o ystad-Schreyer~\cite{EFS}, who utilized Koszul duality and Tate resolutions to transform sheaf cohomology calculations on $\PP^n$ from a computation over the polynomial ring $S$ to a computation over the dual exterior algebra $E$.

While our method is also based upon Tate resolutions, it requires a number of fundamentally new methods.  
As a quick review of the Eisenbud-Fl\o ystad-Schreyer method: they start with a graded module $M=\bigoplus_{e\in \ZZ} M_e$ over a standard graded polynomial ring $S = k[x_0, \dots, x_n]$.  They truncate it to $M_{\geq d}$, where $d$ is greater than $\reg(M)$, the Castelnuovo-Mumford regularity of $M$; then they apply the Koszul duality functor to obtain a free complex $\RR(M_{\geq d})$ of modules  over the exterior algebra $E$ of the form
\[
\RR(M_{\geq d}) = \left[\Hom_\k(E, M_d)\to \Hom_\k(E, M_{d+1})\to \Hom_\k(E, M_{d+2})\to \cdots \right].
\]
Using facts about the truncation $M_{\geq d}$, they prove that $\RR(M_{\geq d})$ is a coresolution of an $E$-module~$P$. They then compute a minimal free resolution of $P$ and utilize the theory of Tate resolutions on $\PP^n$ to relate the Betti numbers of $P$ to sheaf cohomology groups of $\widetilde{M}$.

Adapting this method to the weighted projective setting requires several innovations.  First, when we utilize Koszul duality in this setting, we must pass from working with free complexes to working with differential modules.  For instance, if $M$ is a graded module over a nonstandard graded polynomial ring $S=\kk[x_0,x_1,x_2]$ with degrees $1,2,3$, then the terms of $\RR(M)$ still have the form $\Hom_\k(E,M_e)$, but the differential inherits a nonstandard homological grading, i.e. the differential can ``jump'' by $1,2$ or $3$ steps:
\[
\xymatrixcolsep{5mm}
\xymatrix{
\RR(M_{\geq d}) =\Bigg[\Hom_\k(E, M_d)  \ar[r]^-{}\ar@/_-2pc/[rr]^-{}\ar[r]^-{}\ar@/_-3pc/[rrr]^-{}&\Hom_\k(E, M_{d+1})\ar[r]^-{}\ar[r]^-{}\ar@/_2pc/[rr]_-{}\ar[r]^-{}\ar@/_3pc/[rrr]^-{}&\Hom_\k(E, M_{d+2})\ar[r]^-{}\ar@/_-2pc/[rr]^-{} \ar[r] & \Hom_\k(E, M_{d+3})\ar[r]&\cdots \Bigg]
}
\]
Working in the categorical framework of differential modules requires new foundational computational methods, including a method for computing a minimal free resolution.  Second, in the classical case, the fact that $\RR(M_{\geq d})$ was a coresolution boiled down to well-known facts about linear resolutions of $M_{\geq d}$; but in the weighted case, there are many open questions about the resolutions of such truncations.  Third, in the weighted setting it is simply not the case that $\RR(M_{\geq d})$ is quasi-isomorphic to its homology, even for $d\gg 0$; since $\RR(M_{\geq d})$ generally has infinite rank, we thus need to develop a method for producing a finitely generated subquotient $P$ which is quasi-isomorphic to $\RR(M_{\geq d})$.  Finally, if we overcome all of these, then we can compute the minimal free resolution of the differential module $P$ in the category of differential modules, and we can apply the theory of Tate resolutions on weighted projective stacks to relate the Betti numbers of $P$ to sheaf cohomology groups of $\widetilde{M}$.

One reason to pursue this approach is that there are computational advantages to working over an exterior algebra $E$ as opposed to a symmetric algebra $S$, as the number of monomials of degree $d$ is generally much smaller in $E$ than in $S$.  This streamlines the complexity of symbolic computations such as producing minimal free resolutions, as it avoids the doubly exponential explosion that can occur with symbolic computation over a polynomial ring~\cite{hermann,mayr-meyer,bayer-mumford}.  In reference to their Tate resolution method for $\PP^n$, Decker-Eisenbud~\cite{decker-eisenbud} write: ``In many cases this is the fastest known method for computing cohomology.''  As our paper is about the theoretical foundation for this novel computational approach, and not about the efficiency of a specific implementation, we will not attempt to give specific speed comparisons.  In fact, an efficient implementation of our algorithm would rely upon fast algorithms for performing computations with differential modules, including for computing minimal free resolutions.  This theory remains nascent---especially in comparison to the tremendous work on computing minimal free resolutions of modules---but we hope that this paper will provide motivation for further improvements in that direction.

\medskip 
\subsection{Algorithm for weighted projective stacks}  Let us go into a bit more detail on the main idea behind our algorithm. Let $S$ denote the $\Z$-graded ring $\k[x_0, \dots, x_n]$ with $\deg(x_i) = a_i$, where $\deg(x_i) = a_i \ge 1$, i.e. the coordinate ring of the weighted projective stack $\PP(\mathbf{a}):=\PP(a_0, \dots, a_n)$
(see \S\ref{sec:tate} for background on weighted projective stacks and their relationship with weighted projective spaces). The Koszul dual of~$S$ is the exterior algebra $E = \bigwedge_\k(e_0, \dots, e_n)$, and Koszul duality furnishes an equivalence between coherent sheaves on $\PP(\mathbf{a})$ and certain \emph{differential $E$-modules} (see \S\ref{subsec:DM} for the definition of a differential $E$-module). We show in our previous work \cite{BETate} that this equivalence is described explicitly via the theory of toric Tate resolutions~\cite[Theorem 6.1]{BETate}, and we prove that Tate resolutions transform ranks of sheaf cohomology over weighted projective stacks into Betti numbers of free resolutions of differential $E$-modules~\cite[Theorem 3.3]{BETate}. Our algorithm is an application of this result.

To give an indication of how this works in practice, and the technical issues that arise, let us consider an example. Let $\PP(1, 1, 2)$ denote the weighted projective stack with $\Z$-graded coordinate ring $S = \k[x_0, x_1, x_2]$, and let us compute certain sheaf cohomology groups of the coordinate ring $M \ce S/(x_0^4 + x_1^4 + x_2^2)$ of an elliptic curve embedded in $\PP(1, 1, 2)$.   Tate resolutions have infinite rank, and so our goal is to compute a finite rank piece.  

By Serre vanishing,  there are no higher cohomology groups for degrees $d\geq d_0$ for some $d_0$, and so in this case, $H^0(E, \cO_E(d)) = M_d$.  Since the differential $E$-module $\RR(M)$, constructed via Koszul duality, has the form $\bigoplus_d M_d\otimes \omega_E(-d;0)$ (see \S\ref{sec:tate} for a detailed description of the functor $\RR$, and see the Notation section below for the meaning of $\om_E$), the Betti numbers of $\RR(M)$—or equivalently the Betti numbers of $\RR(M_{\geq d_0})$—compute the ranks of the $H^0$ groups in degrees $d\geq d_0$.

For the higher cohomology groups, we need to compute a portion of the Tate resolution, which can be done by taking a minimal free resolution $F$ of $\RR(M_{\ge d_0})$ for any $d_0\gg 0$.  We thus need: (i) an algorithm to compute the minimal free resolution of a finitely generated, differential $E$-module (see Algorithm~\ref{alg:resTwisted}), and (ii) a finitely generated $E$-module $P$ that is quasi-isomorphic to $\RR(M_{\ge d_0})$, so that we can apply this algorithm to produce the desired resolution $F$.  Finally, the specific recipe for matching Betti numbers of $F$ with sheaf cohomology groups is proven in Corollary~\ref{cor:cohomology}, generalizing a result of Eisenbud-Fl\o ystad-Schreyer in the standard graded case~\cite[Theorem 4.1]{EFS} and applying the authors' previous work in~\cite{BETate}. 
We return to the example of the elliptic curve from above throughout the paper, e.g. in Examples~\ref{ex:ellipticCurve}, ~\ref{ex:weightedRegEllipticCurve}, and~\ref{ex:ellipticCurveTate}.

To summarize, the key technical issues involved are the following: 
\begin{enumerate}
    \item  What is the smallest $d_0$ that we can use to truncate?
    \item  Since $\RR(M_{\geq d_0})$ is infinite rank, how small of a piece can we utilize without losing information about its homology?  
    \item  In the standard graded case, for any finitely generated $S$-module $N$ that is $r$-regular, the complex $\RR(N_{\ge r})$ is formal, i.e. quasi-isomorphic to its homology~\cite[Corollary 2.4]{EFS}. But this is simply not the case in the weighted setting (Example~\ref{ex:formal}).  How do we (efficiently) produce an appropriate finitely generated subquotient of $\RR(N_{\ge r})$ that is quasi-isomorphic to the infinite rank differential module $\RR(N_{\geq r})$?
    \item  How many steps of the minimal free resolution algorithm for differential modules do we need to run to obtain all sheaf cohomology groups in a desired range of degrees? 
\end{enumerate}  
Resolving these issues is the fundamental contribution of this paper, as it transforms the theory of Tate resolutions into a workable algorithm.

An overview of our answers to the technical issues are as follows.  For (1): if $H^0_{\mathfrak m}M = 0$, then we can choose $d_0$ to be the (weighted) regularity of $M$, as defined by Benson~\cite{benson} (see Definition~\ref{def:reg}). If $H^0_{\mathfrak m}M \ne 0$, then we need the regularity plus $1$; see Lemma 4.7 for details.  To address (2), we can work with $\RR(M)$ in degrees ranging from $d_0$ to $d_0 + \sigma + 1$, where $\sigma = \sum (\deg(x_i) - 1)$ is the Symonds constant (see Definition~\ref{defn:symonds}); this follows from Lemma~\ref{lem:jumpbyWn}, which provides a sharp bound on the longest ``jumps'' in the differentials of the Tate resolution, for a particular flag structure.  For (3):  our answer, which is a subquotient involving $\sigma+2$ distinct degrees of the initial module $M$, appears in Lemma~\ref{lem:weightedFinitePiece}.  For (4): to go from $\RR(M_{\geq d_0})$ to the sheaf cohomology groups $H^i$ in degree $\alpha$, we need to iterate the resolution algorithm $d_0-\alpha-i$ steps; see Algorithm~\ref{alg:mainweighted}.

\subsection{Algorithm for weighted projective spaces}  To compute sheaf cohomology on a weighted projective space, i.e. on the variety corresponding to the stack, we will bootstrap off of our methods on the stack.  There is a natural coarse moduli space map $\pi\colon \PP(\mathbf{a})\to \PPv(\mathbf{a})$ relating the stack~$\PP(\mathbf{a})$ and the variety $\PPv(\mathbf{a})$.  This map preserves cohomology groups and thus, for a sheaf $\cF$ on the stack, the cohomology groups of $\cF$ and $\pi_*\cF$ are identical.  By Lemma~\ref{lem:lcmAndCoarse}(4), the same holds for twists of $\cF$ by the pullbacks of line bundles from $\PPv(\mathbf{a})$.  The line bundles on the variety $\PPv(\mathbf{a})$ correspond to $\mathcal O_{\PPv(\mathbf{a})}(j)$
 with $j$ a multiple of the least common multiple of $a_0, a_1, \dots, a_n$, which we denote $\ell \ce \on{lcm}(a_0, \dots, a_n)$.  In particular, if $\cF = \widetilde{M}$, then the Tate resolution methods discussed above compute the sheaf cohomology groups of $\widetilde{M}(j)$ over the variety $\PPv(\mathbf{a})$ provided that $j$ is a multiple of $\ell$.

 What if we twist $\widetilde{M}$ by an integer that is not a multiple of $\ell$?  In this case, we face an additional technical hurdle, because on the weighted projective variety, tensor product may not commute with taking the associated sheaf.  That is, it could be the case on $\PPv(\mathbf{a})$ that $\widetilde{M}(j) \ne \widetilde{M(j)}$.  (This does not happen on the weighted projective stack.)  So we must consider: 
 \begin{quote}
     {\em Given $M$ and $j$, how do we effectively construct\footnote{Of course, if we had a presentation for the sheaf $\widetilde{M}(j)$, and we could compute global sections, then  by (the proof of) \cite[Theorem 1.1]{mustata}, we could set $M'$ to be $\bigoplus_{e\geq 0} H^0(\PPv(\mathbf{a}), \widetilde{M}(j) \otimes \OO(e))$.  But given a presentation of $M$ and the integer~$j$, we may not necessarily have a way to present the sheaf $\widetilde{M}(j)$.} an $S$-module $M'$ such that $\widetilde{M'} \cong~\widetilde{M}(j)$ as coherent sheaves on the weighted projective space $\PPv(\mathbf{a})$?} 
 \end{quote}
This subtle question appears to have been ignored in prior work.  For instance, given a graded module $M$, the Eisenbud-Musta\c{t}\u{a}-Stillman algorithm~\cite{EMS} computes the sheaf cohomolgy groups~$H^i(\PPv(\mathbf{a}), \widetilde{M(j)})$, but it simply makes no attempt to compute the sheaf cohomology of~$\widetilde{M}(j)$ when $\widetilde{M}(j) \ne \widetilde{M(j)}$.\footnote{A recent implementation of the Eisenbud-Musta\c{t}\u{a}-Stillman algorithm in {\tt Macaulay2} appears to perform a computation like this for weighted projective spaces, but this is due to a minor bug in the code.} 

We resolve this question in Proposition~\ref{prop:tensorProduct}.  We then show how this result can be combined with the theory of Tate resolutions to compute sheaf cohomology on weighted projective varieties.  The resulting algorithm essentially amounts to computing $\ell$ distinct resolutions, one for each $0\leq j <\ell$, and weaving these together to get the sheaf cohomology groups on the weighted projective variety.  This provides the first algorithm, on a weighted projective space, for computing the cohomology groups of $\mathcal F(j)$ for $j$ in a specified range.
The algorithm on the weighted projective variety amounts to roughly $\ell$ iterations of the corresponding algorithm on the weighted projective stack.

\bigskip

In addition to the aforementioned algorithms for projective space, the theory of Tate resolutions on products of projective spaces was introduced in \cite{EES} and implemented in Macaulay2~\cite{M2} in the TateOnProducts package~\cite{TateProducts}. For an arbitrary toric variety, the question of effectively computing Tate resolutions remains open~\cite[Question 7.3]{BETate}.

\subsection*{Overview of the paper.} In \S\ref{subsec:DM}, we recall some background on differential $E$-modules, and we describe an algorithm for computing their minimal free resolutions (Algorithm~\ref{alg:resTwisted}). In \S\ref{sec:tate}, we recall some needed facts about weighted projective stacks, weighted Castelnuovo-Mumford regularity, and weighted Tate resolutions. For instance, we prove in \S~\ref{sec:tate}  a key result relating the weighted Tate resolution of a sheaf to a sufficiently large truncation of a module representing it. With these technical details in hand, we describe our algorithm for computing sheaf cohomology over weighted projective stacks in \S\ref{sec:alg}, and we adapt our results to weighted projective spaces in \S\ref{sec:variety}.

\subsection*{Acknowledgments} We thank David Eisenbud, Frank-Olaf Schreyer, and Gregory G. Smith for many conversations that inspired and strengthened this work.   We also thank 
Maya Banks, Tara Gomes, Prashanth Sridhar, Eduardo Torres Davila,  Keller VandeBogert, and Sasha Zotine.
Computations in  Macaulay2~\cite{M2} were essential in the development of this~paper.

\subsection*{Notation}
Let $\k$ be a field, $S$ the $\Z$-graded polynomial ring $\k[x_0, \dots, x_n]$ with $a_i \ce \deg(x_i)$ satisfying $1 \le a_0 \le \cdots \le a_n$, and $\m$ the homogeneous maximal ideal of $S$. Set $a \ce \sum_{i = 0}^n a_i$.  Let $E$ be the exterior algebra $\bigwedge_\k(e_0, \dots, e_n)$, equipped with the $\Z^2$-grading given by $\deg(e_i) = (-a_i;-1)$. Given $\Z^2$-graded $E$-modules $N$ and $N'$, we let $\underline{\Hom}_E(N, N')$ denote the $\Z^2$-graded $E$-module of $E$-linear maps from $N$ to $N'$.  Let $\om_E$ denote the canonical module $E(-a; -n-1)$ of $E$.

 \section{Differential $E$-modules}
 \label{subsec:DM}
 
 A \defi{differential $E$-module} $D$ is a $\Z^2$-graded $E$-module $D$ equipped with a degree $(0; -1)$ $E$-linear endomorphism $\del$ such that $\del^2 = 0$. A \defi{morphism} of differential $E$-modules is a degree $(0; 0)$ $E$-linear map that is compatible with differentials. Let $\DM(E)$ denote the category of differential $E$-modules.  Differential modules appear in Cartan-Eilenberg's textbook~\cite{CE}, and they played an important role in the history of rank conjectures related to the Carlsson Conjecture and the Buchsbuam-Eisenbud-Horrocks Conjecture~\cite{carlsson-problem,ABI,devries,IW}; they have also since
been applied in many works on 
commutative algebra~\cite{DM,BEpositivity,  banks-vdb-1, banks-bs-theory}, algebraic geometry~\cite{linear,BETate}, representation theory~\cite{rouquier,wei,xyy,RZ, stai2}, and beyond~\cite{Stai,TH,HJ25}.

 The \defi{homology} of $D \in \DM(E)$ is  $H(D) \ce \ker(\del \co D \to D(0;-1)) / \im(\del \co D(0; 1) \to D)$. A morphism of differential $E$-modules is a \defi{quasi-isomorphism} if it induces an isomorphism on homology. The \defi{mapping cone} of a morphism
$f \co D \to D'$ of differential $E$-modules is the differential $E$-module $\on{cone}(f) \ce D' \oplus D(0, -1)$ equipped with the differential
$\begin{pmatrix}
\del' & f \\
0 & -\del
\end{pmatrix}$.  A morphism of differential $E$-modules is a quasi-isomorphism if and only if its mapping cone is exact. A differential $E$-module~$D$ is called \defi{minimal} if the induced differential on $D \otimes_E \k$ is the zero map.

\begin{remark}
One may alternately think of $E$ as a differential bigraded algebra with homological grading given by $|e_i| = -1$, ``internal grading" given by $\deg(e_i) = -a_i$, and trivial differential. From this perspective, the category $\DM(E)$ is isomorphic to the category of differential bigraded $E$-modules; see \cite[Remark 2.3]{BETate}. As in \cite{BETate}, we work with differential modules rather than dg-modules because the former are more amenable to computation in \verb|Macaulay2|. 
\end{remark}

A differential $E$-module $F$ with differential $\del_F$ is called a \defi{free flag} if it may be equipped with a decomposition $F = \bigoplus_{i \in \Z} F_i$ such that each $F_i$ is free, $F_i = 0$ for $ i \ll 0$, and $\del_F(F_i) \subseteq \bigoplus_{j < i} F_j$ for all $i \ge 0$.  A \defi{minimal free flag resolution} of a differential $E$-module $D$ is a quasi-isomorphism $F \xra{\simeq} D$, where $F$ is a minimal free flag. Minimal free flag resolutions of differential $E$-modules with finitely generated homology are unique up to isomorphism \cite[Theorem B.2]{BETate}.  We introduce the following algorithm for constructing them in our setting.  We remark that flag structures are not unique, and we refer to the choice in this algorithm as the ``twisted flag'' because it interweaves the bidegree on $E$.

\begin{algorithm}[Twisted flag]
\label{alg:resTwisted}
Let $D$ be a differential $E$-module with finitely generated and nonzero homology.  Write $H(D)_{(i)} \ce \bigoplus_{j - d = i} H(D)_{(d;j)}$, and
set $n \ce \min\{ i \text{ : } H(D)_{(i)} \ne 0\}$. 
\begin{enumerate}
\item Let $y_1, \dots, y_t \in D$ be homogeneous cycles that descend to a $\k$-basis of $H(D)_{(n)}$, and let $F_{(n)}$ be a graded free $E$-module of rank $t$ equipped with a basis whose $i^{\th}$ element has the same bidegree as $y_i$.  Define a morphism $\e_n \co F_{(n)} \to D$ of differential $E$-modules (where the differential on $F_{(n)}$ is trivial) that sends the $i^{\th}$ basis element of $F_{(n)}$ to $y_i$.   
\item The map $\e_n$ determines an isomorphism on homology in bidegrees $(d; j)$ with $j - d = n$. If $\cone(\e_n)$ is exact, we are done; $\e_n$ gives a minimal free flag resolution of $D$. Otherwise, apply Step (1) to $\cone(\e_n)$, observing that $H(\cone(\e_n))_{(i)} = 0$ for $i < n-1$. 
\end{enumerate}
Iterating this procedure gives a  free flag resolution $F \ce \bigoplus_{i \ge 0} F_{(i)} \xra{\simeq} D$, where each $F_{(i)}$ is finitely generated in degrees of the form $(d; j)$ such that $j - d = i$. Moreover, the differential on $F$ is  minimal, since if $i' < i$, any map $F_{(i)} \to F_{(i')}$ is necessarily minimal.
\end{algorithm}

\section{Tate resolutions on weighted projective stacks}
\label{sec:tate}

\subsection{The nonstandard $\Z$-graded BGG correspondence}

Let $\on{mod}(S)$ denote the category of finitely generated $\Z$-graded $S$-modules. The nonstandard $\Z$-graded version of the BGG correspondence gives a functor
$
\RR \co \on{mod}(S) \to \DM(E)
$
given by
$$
\RR(M) = \bigoplus_{d \in \Z} M_d \otimes_\k \om_E(-d; 0), \quad \text{with differential} \quad \del_\RR(m \otimes f) = \sum_{i = 0}^n x_im\otimes e_if.
$$
The functor $\RR$ induces an equivalence on bounded derived categories; see \cite[\S 2.2]{BETate} for additional background on the nonstandard $\Z$-graded BGG-correspondence (and see also \cite{HHW}). 

\begin{remark}
\label{rem:unfold}
Suppose $S$ is standard graded. One can ``unfold" an object in $\DM(E)$ to obtain a complex of $E$-modules \cite[Definition 2.6]{BETate}. Composing the BGG functor $\RR$ above with this unfolding functor gives the usual BGG functor taking values in complexes~\cite[Remark 2.9(2)]{BETate}.
\end{remark}
\begin{remark}
\label{rem:truncR}
We will be interested in applying the functor $\RR$ to truncations of modules: for any integer $r$, $\RR(M_{\ge r})$ is simply the differential $E$-submodule $\bigoplus_{d \ge r} M_d \otimes_\k \om_E(-d;0)$ of $\RR(M)$. 
\end{remark}

\begin{example}\label{ex:ellipticCurve}
Let us consider the case of $\PP(1,1,2)$, where the Cox ring is $S = k[x_0, x_1, x_2]$ with $\deg(x_0) = 1 = \deg(x_1)$ and $\deg(x_2) = 2$. Let $M= S/(x_0^4+x_1^4+x_2^2)$ be the coordinate ring of a genus $1$ curve in $\PP(1,1,2)$.  The differential module $\RR(M)$ is an infinitely generated free $E$-module whose generators are in bijection with a $\kk$-basis for $M$.  In this example, we have:
\[
\begin{tabular}{c|c|c|c|c|c|c}
    $i$&0&1&2&3&4&$\cdots$ \\ \hline
    $\dim M_i$ & 1 &2&4&6&8&$\cdots$
\end{tabular}
\]
Thus, $\RR(M)$ has the form:
\vskip\baselineskip
\footnotesize
\[
\RR(M) = \left[\xymatrix{
\om_E \ar[r]^-{}\ar@/_2pc/[rr]_-{}& \om_E(-1;0)^{\oplus 2} \ar[r]^-{}\ar@/_-2pc/[rr]^-{}&\om_E(-2;0)^{\oplus 4} \ar[r]^-{}\ar@/_2pc/[rr]_-{} &\om_E(-3;0)^{\oplus 6}\ar[r]^-{} & \cdots
}\right]
\]
\normalsize
The above graphic is an attempt to represent a flag differential module, not a complex.  To be more precise, we choose monomial bases for each $M_i$, e.g. $\{1\}, \{x_0,x_1\}$ and $\{x_0^2,x_0x_1,x_1^2, x_2\}$ for $M_0, M_1$ and $M_2$, respectively.  The differential $\partial$ sends the generator $1 \in \omega_E$ to $e_0\otimes x_0 + e_1\otimes x_1 + e_2\otimes x_2$.  This element lies in $\omega_E(-1;0)^{\oplus 2} \oplus \omega_E(-2; 0)^{\oplus 4}$.  This is indicated in the above graphic by the arrows emanating from $\omega_E$. For any $r \ge 0$, $\RR(M_{\ge r})$ may be depicted by removing all summands of the form $\om_E(-d ; 0)$ for $d < r$ from the picture above.
\end{example}

\subsection{Weighted Castelnuovo-Mumford regularity}\label{subsec:weightedReg}
The following notion of Castelnuovo-Mumford regularity for modules over nonstandard $\Z$-graded polynomial rings was introduced by Benson~\cite{benson}.

\begin{defn}
\label{def:reg}
A graded $S$-module $M$ is called \defi{$r$-regular} if 
$H^i_{\m}(M)_j = 0$
for all $j > r - i$. The \defi{regularity of $M$} is defined to be $\reg(M) \ce \inf\{r \text{ : } \text{$M$ is $r$-regular}\}$.
\end{defn}

If $M$ is a finitely generated graded $S$-module, then \cite[Proposition 1.2]{symonds} implies:
\begin{equation}
\label{eqn:symonds}
\reg(M) = \max\{ j-i \text{ : } \Tor_i^S(M, \k)_j \ne 0\} + n + 1 - a.
\end{equation}
In particular,  $\reg(M) < \infty$. Recalling that $a  = \sum_{i=0}^n a_i = \sum_{i=0}^n \deg(x_i)$, this also leads to the following definition:
\begin{defn}\label{defn:symonds}
    We define the \defi{Symonds constant} for $S$ as $\sigma:= \sum_{i=0}^n (a_i - 1) = a - (n+1)$.
\end{defn}

\begin{example}
\label{ex:weightedRegEllipticCurve}
We have $\reg(S) = -\sigma$, and $\reg(\k) = 0$. If $M$ is the coordinate ring of the elliptic curve from Example~\ref{ex:ellipticCurve}, its $S$-free resolution has the form $0 \from S \from S(-4) \from 0$, and so the formula~\eqref{eqn:symonds} implies that $\reg(M) = 2$. 
\end{example}

\begin{lemma}
\label{rem:trunc}
Let $M$ be a graded $S$-module, and suppose $M$ is $r$-regular.
\begin{enumerate}
\item The module $M_{\ge r+1}$ is also $r$-regular. In particular, $H^0_\m(M_{\ge r+1}) = 0$. 
\item If $H^0_\m(M) = 0$, then $M_{\ge r}$ is $r$-regular. 
\end{enumerate} 
\end{lemma}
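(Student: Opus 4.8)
The plan is to relate the local cohomology of each truncation to that of $M$ through the short exact sequence
\[
0 \to M_{\ge d} \to M \to M/M_{\ge d} \to 0,
\]
with $d = r+1$ in part (1) and $d = r$ in part (2). Write $Q \ce M/M_{\ge d} = \bigoplus_{j < d} M_j$. Since $M$ is finitely generated, $M_j = 0$ for $j \ll 0$, so $Q$ is a finite-dimensional $\k$-vector space, hence an $S$-module of finite length supported at $\m$. The only fact about $Q$ I will need is the resulting $H^0_\m(Q) = Q$ together with $H^i_\m(Q) = 0$ for all $i \ge 1$.

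Feeding this into the long exact sequence in local cohomology, the vanishing of $H^i_\m(Q)$ for $i \ge 1$ immediately gives isomorphisms $H^i_\m(M_{\ge d}) \cong H^i_\m(M)$ for every $i \ge 2$, so these strands contribute to $\reg(M_{\ge d})$ exactly as they do for $M$. It also leaves the four-term exact sequence
\[
0 \to H^0_\m(M_{\ge d}) \to H^0_\m(M) \to Q \to H^1_\m(M_{\ge d}) \to H^1_\m(M) \to 0.
\]
Reading the leftmost map degree by degree identifies $H^0_\m(M_{\ge d})$ with $(H^0_\m M)_{\ge d}$. For part (1) this settles the $H^0$ claim: $r$-regularity of $M$ forces $H^0_\m(M)_j = 0$ for $j > r$, whence $(H^0_\m M)_{\ge r+1} = 0$. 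For part (2) the hypothesis $H^0_\m(M) = 0$ gives $H^0_\m(M_{\ge r}) = 0$ outright, and disposes of the $i = 0$ contribution to regularity.

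What remains is the $i = 1$ strand, which I would analyze one degree at a time from the four-term sequence. For $j \ge d$ we have $Q_j = 0$, so $H^1_\m(M_{\ge d})_j \cong H^1_\m(M)_j$, which vanishes once $j > r - 1$ by $r$-regularity of $M$; for $j < d$ the group $H^1_\m(M_{\ge d})_j$ is an extension of $H^1_\m(M)_j$ by $M_j/(H^0_\m M)_j$. In part (2), where $d = r$, there is no integer $j$ with $r - 1 < j < r$, so the only degrees that can support $H^1_\m(M_{\ge r})$ are $j \le r - 1$ (contributing $j + 1 \le r$), and $M_{\ge r}$ is $r$-regular. I expect the crux of part (1) to be precisely the single boundary degree $j = r$, which lies in the open range $r - 1 < j < r+1$: there $H^1_\m(M)_r = 0$, but the finite-length quotient contributes $M_r/(H^0_\m M)_r$ to $H^1_\m(M_{\ge r+1})_r$, and the heart of the argument is to control this term against the desired regularity bound. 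This boundary bookkeeping—the one place where passing to a truncation can raise the regularity—is the step I anticipate will require the most care.
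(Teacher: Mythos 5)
Your setup coincides exactly with the paper's: the same short exact sequence $0 \to M_{\ge d} \to M \to M/M_{\ge d} \to 0$, the same observation that the quotient $Q$ is $\m$-torsion (so $H^i_\m(Q)=0$ for $i>0$ and $H^0_\m(Q)=Q$; note this needs no finite generation, since every homogeneous element of $Q$ is killed by a high power of $\m$), and the same long exact sequence. Your proof of part (2) is complete and correct, as is your proof of the clause $H^0_\m(M_{\ge r+1})=0$ in part (1).

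The step you defer as ``the one requiring the most care'' --- controlling $H^1_\m(M_{\ge r+1})_r \cong M_r/(H^0_\m M)_r$ --- is not a gap you failed to close: your computation of that term is exactly right, and it shows that part (1) is false as literally stated. For instance, take $S=\k[x]$ standard graded and $M=S$, which is $0$-regular; then $M_{\ge 1}\cong S(-1)$ and $H^1_\m(S(-1))_0\cong H^1_\m(S)_{-1}\ne 0$, so $M_{\ge 1}$ is not $0$-regular (its regularity is $1$). What your four-term sequence does prove is the corrected statement that $M_{\ge r+1}$ is $(r+1)$-regular with $H^0_\m(M_{\ge r+1})=0$: for $i=1$ one then only needs vanishing in degrees $j>r$, where $Q_j=0$, and for $i\ge 2$ the isomorphisms $H^i_\m(M_{\ge r+1})\cong H^i_\m(M)$ give more than enough. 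The paper's own proof establishes precisely the vanishing $H^i_\m(M_{\ge r+1})_j=0$ for all $i$ and $j>r$ and stops there; it silently skips the boundary case $i=1$, $j=r$ that you isolated, so it too only proves the corrected statement. Fortunately that is all that is ever used downstream: the lemma is always invoked for a module with $r>\reg(M)$ in order to conclude that $M_{\ge r}$ is $r$-regular with vanishing $H^0_\m$, which is the $(r+1)$-regularity conclusion applied to the $(r-1)$-regular module $M$. So rather than needing more bookkeeping, your argument, pushed to its natural endpoint, both completes the proof of the true statement and detects the overstatement in the lemma; you should record the counterexample and the corrected formulation rather than search for a way to kill $M_r/(H^0_\m M)_r$.
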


\begin{proof}
We have $H^i_{\m}(M/M_{\ge r+1}) = 0$ for $i > 0$, and $H^0_{\m}(M/M_{\ge r+1})_j = (M/M_{\ge r+1})_j = 0$ for $j > r$. The long exact sequence in local cohomology induced by $0 \to M_{\ge r+1} \to M \to M/M_{\ge r+1} \to 0$ therefore implies $H^i_{\m}(M_{\ge r+1}) \cong H^i_{\m}(M)$ for $i > 0$ and $j > r$, and $H^0_{\m}(M_{\ge r+1})_j \cong H^0_{\m}(M)_j = 0$ for~$j > r$. This proves (1). The same argument with $M_{\ge r}$ in place of $M_{\ge r+1}$ proves (2). 
\end{proof}

\subsection{Weighted projective stacks and varieties}
\label{rmks:varietyVsStack}
Let $\a$ be the ordered list $a_0, \dots, a_n$ of the degrees of $x_0, \dots, x_n  \in S$, and let $\PP(\mathbf{a})$ be the weighted projective stack $[\A^{n+1} - \{0\} / \k^*]$ associated to the action of the multiplicative group $\k^*$ on $\A^{n+1}$ given by
$
\lambda \cdot (x_0, \dots, x_n) = (\lambda^{a_0}x_0, \dots, \lambda^{a_n} x_n).
$ 
Let $\PPv(\mathbf{a}) \ce \Proj(S)$ denote the associated weighted projective \emph{variety}.  Sheaves on the stack $\PP(\a)$ behave much like sheaves on projective space, while sheaves on the variety $\PPv(\mathbf{a})$ are pathological in many respects. 
Let us now compare the behavior of sheaves on $\PP(\a)$ and $\PPv(\mathbf{a})$ in some detail. 

The stack $\PP(\mathbf{a})$ is toric Deligne-Mumford, and so the sheaf $\OO_{\PP(\mathbf{a})}(i)$ on $\PP(\mathbf{a})$ is a line bundle for all $i \in \Z$. By contrast, the variety $\PPv(\mathbf{a})$ is generally singular, and the sheaf  $\cO_{\PPv(\mathbf{a})}(i)$ is not a line bundle unless $ \ell \ce \on{lcm}(a_0, \dots, a_n)$ divides $i$~\cite[Theorem 7.1(c)]{BR}.  This creates various subtleties in working with the variety.  For instance, while the isomorphism $\OO(i) \otimes \OO(j) \cong \OO(i + j)$ holds over~$\PP(\mathbf{a})$, the analogous relation does not always  hold over~$\PPv(\mathbf{a})$~\cite[pp. 134]{BR}. However, we have $\OO(i) \otimes \OO(j) \cong \OO(i + j)$ over $\PPv(\a)$ when $i$ or $j$ is a multiple of $\ell$ \cite[Corollary 4A.5(b)]{BR}.

 The category $\on{coh}(\PP(\mathbf{a}))$ of coherent sheaves on $\PP(\a)$ has a simple algebraic description: it is the abelian quotient of the category $\on{mod}(S)$ of graded $S$-modules by the subcategory of objects with finite dimension over $\k$. Given an object $M$ in $\on{mod}(S)$, we let $\widetilde{M}$ denote the associated sheaf on~$\PP(\mathbf{a})$.   We note that a map of such modules $M\to M'$ induces an isomorphism of coherent sheaves $\widetilde{M}\to \widetilde{M'}$ on $\PP(\mathbf{a})$ if and only if the map is an isomorphism $M_d\to M'_d$ for all $d\gg 0$.
By contrast, the correspondence between sheaves on the variety $\PPv(\mathbf{a})$  and $S$-modules is more complicated.  Every finitely generated, graded $S$-module $M$ induces a corresponding coherent sheaf on the variety, which we also denote by $\widetilde{M}$, but the equivalence relation is more subtle as one must differentiate between degrees that correspond to elements of the Picard group of $\PPv(\mathbf{a})$ and those that correspond to elements of the divisor class group of $\PPv(\mathbf{a})$.  For instance, a map of such modules $M\to M'$ induces an isomorphism of coherent sheaves on $\PP(\mathbf{a})$ if and only if the map is an isomorphism $M_d\to M'_d$ for all $d$ sufficiently large and divisible by $\ell$~\cite[Exercise 5.3.5]{CLS}.

 Sheaf cohomology computations can be translated between the  stack $\PP(\a)$ and the variety $\PPv(\a)$. More precisely: the pushforward along the coarse moduli space map $\pi \co \PP(\mathbf{a}) \to \PPv(\mathbf{a})$ is exact on coherent sheaves~\cite[Proposition 3.5]{EM12}, and so computing the cohomology of a coherent sheaf $\F$ on~$\PP(\mathbf{a})$ is the same as computing the cohomology of $\pi_*(\F)$ over ~$\PPv(\mathbf{a})$. 

But one needs to be careful when combining sheaf cohomology with twists, as in our primary computational goal of computing the sheaf cohomology groups of all twists within a specified range.  Given a sheaf $\F$ on the stack $\PP(\a)$ and $j \in \Z$, we write $\F(j) \ce \F \otimes \OO(j)$, and similarly over the variety~$\PPv(\a)$. Given a graded $S$-module $M$, we have $\widetilde{M(j)} \cong \widetilde{M}(j)$ over the stack $\PP(\a)$, but this relation does not hold over the variety, as already noted above when $M = S(i)$ for $i \in \Z$. We have $H^i(\PP(\a), \widetilde{M(j)}) \cong H^i(\PPv(\a), \widetilde{M(j)})$ for all $i, j$; one sees this by comparing both to $H^i_\m(M)_j$. But it can happen that $H^i(\PPv(\a), \widetilde{M}(j)) \ncong H^i(\PPv(\a), \widetilde{M(j)})$: see Example~\ref{ex:stackVsVariety}.

\subsection{Tate resolutions on weighted projective stacks} 
Let $\F$ be a coherent sheaf on $\PP(\mathbf{a})$, ~$M$ a finitely generated graded $S$-module such that $\widetilde{M} = \F$, and $r > \reg(M)$. It follows from \cite[Proposition 2.11(a)]{BETate} that, given any finitely generated graded $S$-module $N$, the homology of $\RR(N)$ is finitely generated. We may therefore apply Algorithm~\ref{alg:resTwisted} to construct a minimal free flag resolution $\e \co F \xra{\simeq} \RR(M_{\ge r})$. We let $\Tate(\F)$ denote the exact, free differential $E$-module $\cone(\e)$. The following theorem shows that $\Tate(\F)$ coincides with the \emph{Tate resolution of $\F$}, as defined in \cite[\S 3]{BETate}; in particular, $\Tate(\F)$ is independent of the choice of $r$, and its Betti numbers encode the sheaf cohomology of the twists of $\F$. The Tate resolution functor $\F \mapsto \Tate(\F)$ is a geometric analogue of the weighted  BGG functor $\RR$; see \cite[\S 3]{BETate} for details. 

\begin{thm}
\label{thm:tate}
Let $\F \in \coh(\PP(\mathbf{a}))$, $M$ a finitely generated graded $S$-module such that $\widetilde{M} = \F$, and~$r > \reg(M)$. Let $M'$ denote the $\m$-saturation $\bigoplus_{i \in \Z} H^0(\PP(\mathbf{a}), \F(i))$ of~$M$.
\begin{enumerate}
\item The differential $E$-module $\RR(M')$ has finitely generated homology.\footnote{Theorem~\ref{thm:tate}(1) does not immediately follow from \cite[Proposition 2.11(a)]{BETate}, since~$M'$ is not necessarily finitely generated. Theorem~\ref{thm:tate}(1) is used implicitly in the statement and proof of \cite[Theorem 3.7]{BETate}.}
\item Let $\e \co F \xra{\simeq} \RR(M_{\ge r})$ and $\e' \co F' \xra{\simeq}  \RR(M')$ be minimal free flag resolutions, as constructed in Algorithm~\ref{alg:resTwisted}. There is an isomorphism $\cone(\e) \xra{\cong} \cone(\e')$ of differential $E$-modules. 
In particular, our definition of~$\Tate(\F)$ does not depend on $r$.
\item The differential $E$-module $\Tate(\F)$ is minimal.
\item We have $H^j(\PP(\mathbf{a}), \F(i)) \cong \underline{\Hom}_E(\k, \Tate(\F))_{(i, -j)}$ for all $i, j \in \Z$. 
\end{enumerate}
Moreover, if $H^0_\m(M) = 0$, then (1) - (4) also hold with $r = \reg(M)$.
\end{thm}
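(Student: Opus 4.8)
The plan is to show that the proof of parts (1)--(4) already given for $r > \reg(M)$ uses the hypothesis on $r$ only through the single consequence that the truncation $M_{\ge r}$ is $r$-regular, and then to supply exactly this consequence at the boundary value $r = \reg(M)$ by invoking Lemma~\ref{rem:trunc}(2). Write $\rho \ce \reg(M)$. Part (1) makes no reference to $r$, so it needs nothing new. For the remaining parts, note first that truncation does not change the associated sheaf, so $\widetilde{M_{\ge\rho}} = \F$; and since $H^0_\m(M) = 0$, Lemma~\ref{rem:trunc}(2) gives that $M_{\ge\rho}$ is again $\rho$-regular. Thus $M_{\ge\rho}$ enjoys precisely the two features---representing $\F$, and being regular in the relevant degree---that the established case $r > \rho$ extracts from its hypothesis. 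The same hypothesis also forces $M_\rho = M'_\rho$ (and hence $M_{\ge\rho} = M'_{\ge\rho}$): from the four-term sequence $0 \to H^0_\m(M) \to M \to M' \to H^1_\m(M) \to 0$ together with $\rho$-regularity one has $H^1_\m(M)_\rho = 0$, so $M_\rho \to M'_\rho$ is an isomorphism exactly when $H^0_\m(M)_\rho = 0$.

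To make the comparison concrete and to locate precisely where the hypothesis enters, I would compare the constructions at $r = \rho$ and $r = \rho + 1$, the latter being covered by the main statement so that $\cone(\e_{\rho+1}) = \Tate(\F)$. Since $\del_\RR$ raises $M$-degree by at least $1$, there is a short exact sequence of differential $E$-modules
\[
0 \to \RR(M_{\ge\rho+1}) \to \RR(M_{\ge\rho}) \to P \to 0, \qquad P \ce M_\rho \otimes_\k \om_E(-\rho; 0),
\]
in which $P$ is free and carries the zero differential. The hypothesis $H^0_\m(M) = 0$ is \emph{essential}, and its necessity is visible here: if instead $0 \ne m \in H^0_\m(M)_\rho$, then $\rho$-regularity of $M$ gives $x_i m \in H^0_\m(M)_{\rho + a_i} = 0$ for every $i$, so $\del_\RR(m \otimes f) = \sum_i x_i m \otimes e_i f = 0$. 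Hence $m \otimes f$ is a cycle of $\RR(M_{\ge\rho})$ that is not a boundary (it lies in the lowest $M$-degree, into which nothing maps), and it contributes a spurious generator to $\Tate(\F)$ in bidegree $(\rho, 0)$; through the identification in part~(4) this overcounts $H^0(\PP(\mathbf{a}), \F(\rho))$ by $\dim_\k H^0_\m(M)_\rho$, since then $M_\rho$ strictly exceeds $M'_\rho = H^0(\PP(\mathbf{a}), \F(\rho))$. Thus $H^0_\m(M) = 0$ is exactly what kills the degree-$\rho$ torsion so that no information about $\F$ is doubled at the boundary.

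The step I expect to require genuine care---the main obstacle---is concluding from the above that the two constructions yield the \emph{same} free differential module, namely that $\cone(\e_\rho) \cong \cone(\e_{\rho+1}) = \Tate(\F)$. The delicacy is that $\RR(M_{\ge\rho})$ is not quasi-isomorphic to its homology (this is the weighted phenomenon flagged in the introduction), and in fact its homology differs from that of $\RR(M_{\ge\rho+1})$; so the comparison cannot be read off from the homology long exact sequence of the displayed short exact sequence, and since each $\cone(\e)$ is exact the content lives entirely in the free-module (Betti) structure. I would therefore phrase the argument using the uniqueness of minimal free flag resolutions \cite[Theorem B.2]{BETate}: the free module $P$ with zero differential is its own minimal free flag resolution, and under $r$-regularity of $M_{\ge\rho}$ one checks that building the minimal free flag resolution of $\RR(M_{\ge\rho})$ extends the resolution of $\RR(M_{\ge\rho+1})$ by the top-level free summand $P$ without introducing contractible summands, so the resulting cones are isomorphic by uniqueness. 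With (2) thus established at $r = \rho$, part~(3) is immediate since $\Tate(\F)$ is the object already shown to be minimal, and part~(4) follows because the cohomology identification depends only on $\Tate(\F)$, now seen to be unchanged by the choice $r = \rho$ versus $r = \rho + 1$.
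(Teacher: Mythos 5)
Your proposal only addresses the final sentence of the theorem (the boundary case $r = \reg(M)$) and explicitly presupposes that ``the proof of parts (1)--(4) already given for $r > \reg(M)$'' exists. Since the task is to prove the whole statement, this leaves the main content unproven. In particular: (i) part (1) is a substantive claim --- $M'$ need not be finitely generated, so the finite generation of $H(\RR(M'))$ does not follow from anything previously established; the paper proves it by applying $\RR$ to the sequence $0 \to M_{\ge r} \to M' \to H^1_\m(M_{\ge r}) \to 0$, identifying $H^1_\m(M_{\ge r})$ with $\Ext^n_S(M_{\ge r}, S(-a))^*$ via local duality, and using that $\RR$ intertwines $\k$-duality with $\Hom_E(-,E)$. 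Saying part (1) ``makes no reference to $r$, so it needs nothing new'' is not a proof. (ii) Part (2), the heart of the theorem, is also absent: the paper proves $\cone(\e) \cong \cone(\e')$ by decomposing $\RR(M') \cong \RR(H^1_\m(M_{\ge r})) \oplus \RR(M_{\ge r})$ as a free module, using $r$-regularity to show the differential is block lower triangular, assembling from $F'$ and $\RR(H^1_\m(M_{\ge r}))$ a minimal free flag resolution of $\RR(M_{\ge r})$, and invoking uniqueness of minimal free flag resolutions. Parts (3) and (4) then follow from (2) together with the cited result \cite[Theorem 3.7]{BETate}, which your proposal never invokes.

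Even for the boundary case that you do address, the step you yourself flag as ``the main obstacle'' --- that the minimal free flag resolution of $\RR(M_{\ge\rho})$ extends that of $\RR(M_{\ge\rho+1})$ by the free summand $P = M_\rho \otimes_\k \om_E(-\rho;0)$ without contractible summands, so that the cones agree --- is asserted rather than proved, and carrying it out would require exactly the block-triangular-plus-uniqueness argument that is missing from your treatment of the general case (one also has to track the homological twist $(0,-1)$ on $P$ carefully, since $P$ sits inside $\RR(M_{\ge\rho})$ in one cone but inside $F(0,-1)$ in the other). The paper's route to the boundary case is simpler: it just reruns the entire argument with $r = \reg(M)$, using Lemma~\ref{rem:trunc}(2) (which needs $H^0_\m(M) = 0$) in place of Lemma~\ref{rem:trunc}(1) to guarantee that $M_{\ge r}$ is still $r$-regular and that $H^0_\m(M_{\ge r}) = 0$. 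Your diagnosis of why the hypothesis $H^0_\m(M) = 0$ is needed (torsion in degree $\rho$ produces spurious cycles that would overcount $H^0(\PP(\mathbf{a}),\F(\rho))$) is a correct and useful sanity check, but it does not substitute for the missing arguments.
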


\begin{proof}[Proof of Theorem~\ref{thm:tate}]

Lemma~\ref{rem:trunc}(1) implies $H^0_\m(M_{\ge r}) = 0$, and so there is a short exact sequence 
\begin{equation}\label{eqn:SESlocalCohom}
0 \to M_{\ge r} \to M' \to H^1_{\fm}(M_{\ge r}) \to 0.
\end{equation}
Since the functor $\RR$ is exact, we thus have a short exact sequence
$0 \to \RR(M_{\ge r}) \to \RR(M') \to \RR(H^1_{\fm}(M_{\ge r})) \to 0$, and so we need only show that $\RR(H^1_{\fm}(M_{\ge r}))$ has finitely generated homology. By local duality, we have $H^1_\fm(M_{\ge r}) \cong \Ext^n_S(M_{\ge r}, S(-a))^*$, where $( - )^*$ denotes the $\k$-dual. Since $\Ext^n_S(M_{\ge r}, S(-a))$ is a finitely generated $S$-module, and the functor $\Hom_E( - , E)$ is exact, (1) follows from the straightforward observation that for any $E$-module $N$, $\RR(N^*)$ coincides, up to a grading twist, with $\Hom_E(\RR(N), E)$. Let us now prove~(2). By (1), $\RR(M')$ admits a minimal free flag resolution via Algorithm~\ref{alg:resTwisted} (since $M$ is finitely generated, the similar statement for $M_{\ge r}$ follows from \cite[Proposition 2.11(a)]{BETate}). 

As a free $E$-module (ignoring the differential), we have
\begin{equation}
\label{eqn:H1}
\RR(M') \cong  \RR(H^1_{\fm}(M_{\ge r}))\oplus \RR(M_{\ge r}),
\end{equation}
because \eqref{eqn:SESlocalCohom} splits as $\k$-vector spaces.  Notice that $\RR(M_{\ge r})$ is a sum of free modules of the form $\om_E(-d, 0)$ for $d \ge r$; and $\RR(H^1_{\fm}(M_{\ge r}))$ is a sum of free modules of the form $\om_E(-d, 0)$ for $d < r$, since $M_{\ge r}$ is $r$-regular by Lemma~\ref{rem:trunc}(1). It follows that the differential on $\RR(M')$ may be expressed, via the isomorphism~\eqref{eqn:H1}, as a $2 \times 2$ block matrix
$
\begin{pmatrix} 
\del_{\RR(H^1_{\m}(M_{\ge r}))} & 0 \\ \alpha  & \del_{\RR(M_{\ge r})}
\end{pmatrix}
$
for some degree zero $E$-linear map $\alpha \co \RR(H^1_{\fm}(M_{\ge r}))(0, 1) \to \RR(M_{\ge r})$ such that $\alpha\del_{\RR(H^1_{\m}(M_{\ge r}))} + \del_{\RR(M_{\ge r})}\alpha = 0$. 
Write the augmentation map $\e' \co F' \to \RR(M')$ as the column matrix $\begin{pmatrix} \e'_1 \\ \e'_2\end{pmatrix}$, and let $G$ be the minimal free flag $ F' \oplus \RR(H^1_\fm(M_{\ge r})) $ with differential $\begin{pmatrix} \del_{F'} &  0\\ -\e_1' & -\del_{\RR(H^1_{\m}(M_{\ge r}))}\end{pmatrix}$. The map
$
G \xra{\begin{pmatrix} \e_2' &  \alpha\end{pmatrix}} \RR(M_{\ge r})
$ is a minimal free flag resolution of $\RR(M_{\ge r})$. By the uniqueness of minimal free flag resolutions, we thus have $\cone\begin{pmatrix} \e_2' & \alpha\end{pmatrix} \cong \cone(\e)$. Since $\cone\begin{pmatrix} \e_2' & \alpha\end{pmatrix} \cong \cone(\e')$, we have proven (2).
Parts (3) and (4) follow  from (2) and \cite[Theorem 3.7]{BETate}, and the final statement follows from the above argument combined with Lemma~\ref{rem:trunc}(2).
\end{proof}

Before stating the following corollary of Theorem~\ref{thm:tate}, we fix some notation: given a coherent sheaf $\F$ on $\PP(\mathbf{a})$, we write $h^i(\F) \ce \dim_\kk H^i(\PP(\mathbf{a}), \F)$.  

\begin{cor}
\label{cor:cohomology}
Let $\F \in \coh(\PP(\mathbf{a}))$, $M$ a finitely generated graded $S$-module such that $\widetilde{M} = \F$, $r > \reg(M)$, and $F$ a minimal free flag resolution of $\RR(M_{\ge r})$. We have:
$$
h^i(\F(j)) =
\begin{cases}
\dim_\k M_j, & i = 0 \text{, } j \ge r; \\
0, & i > 0 \text{, } j \ge r; \\
\dim_\k \underline{\Hom}_E(\k, F)_{(j, -i - 1)}, &  j < r. 
\end{cases}
$$
If $H^0_\m(M) = 0$, then the same conclusion holds when $r = \reg(M)$.
\end{cor}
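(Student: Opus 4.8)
The plan is to split into the two degree regimes $j \ge r$ and $j < r$, handling the former by elementary regularity estimates and the latter by Theorem~\ref{thm:tate}. The engine for the second regime is part (4) of Theorem~\ref{thm:tate}, which after relabeling reads $h^i(\F(j)) = \dim_\k \underline{\Hom}_E(\k, \Tate(\F))_{(j, -i)}$, together with the description $\Tate(\F) = \cone(\e)$. As a $\Z^2$-graded free $E$-module (forgetting the differential) the mapping cone is $\cone(\e) = \RR(M_{\ge r}) \oplus F(0;-1)$, and since $\underline{\Hom}_E(\k, -)$ is the socle functor it is additive and commutes with internal twists; so it suffices to compute the socle of each summand.

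For $j \ge r$ I would argue directly. Since $r > \reg(M)$, the module $M$ is $(r-1)$-regular, so $H^0_\m(M)_j = 0$ and $H^i_\m(M)_j = 0$ for all $i \ge 1$ and $j \ge r$. Writing $M' = \bigoplus_j H^0(\F(j))$ for the $\m$-saturation, the exact sequence $0 \to H^0_\m(M) \to M \to M' \to H^1_\m(M) \to 0$ gives $M_j \cong M'_j = H^0(\F(j))$ for $j \ge r$, hence $h^0(\F(j)) = \dim_\k M_j$; and the standard identification $H^i(\F(j)) \cong H^{i+1}_\m(M)_j$ for $i \ge 1$ gives $h^i(\F(j)) = 0$ for $i > 0$ and $j \ge r$.

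For $j < r$ I would compute the two socles using $\om_E \cong E(-a; -n-1)$. The socle of $E$ is the top form, in bidegree $(-a; -n-1)$, so $\underline{\Hom}_E(\k, \om_E(-d; 0))$ is one-dimensional, concentrated in bidegree $(d; 0)$. Therefore
\[
\underline{\Hom}_E(\k, \RR(M_{\ge r}))_{(s;t)} = \begin{cases} M_s, & t = 0 \text{ and } s \ge r, \\ 0, & \text{otherwise,}\end{cases}
\]
which vanishes in every bidegree $(j, -i)$ with $j < r$. The remaining summand contributes $\underline{\Hom}_E(\k, F(0;-1))_{(j, -i)} = \underline{\Hom}_E(\k, F)_{(j, -i-1)}$ once the twist $(0;-1)$ from the cone is absorbed. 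Feeding this into Theorem~\ref{thm:tate}(4) yields $h^i(\F(j)) = \dim_\k \underline{\Hom}_E(\k, F)_{(j, -i-1)}$, as claimed.

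The main obstacle is bookkeeping rather than ideas: one must track the $\Z^2$-grading through the socle of $\om_E$, the internal twist $(0;-1)$ introduced by the cone, and the index swap between Theorem~\ref{thm:tate}(4) and the corollary. Two consistency points merit a sentence. First, the two regimes agree on their boundary: Theorem~\ref{thm:tate}(4) and the regularity computation must coincide for $j \ge r$, which forces $\underline{\Hom}_E(\k, F)_{(j, t)} = 0$ there for $t \le -1$ (this is also visible from the degrees of the generators produced by Algorithm~\ref{alg:resTwisted}). Second, the addendum for $H^0_\m(M) = 0$ and $r = \reg(M)$ goes through verbatim: the only place the strict inequality $r > \reg(M)$ was used is the vanishing $H^0_\m(M)_r = 0$, which is now supplied by the hypothesis $H^0_\m(M) = 0$, while the closing clause of Theorem~\ref{thm:tate} provides parts (1)--(4) with $r = \reg(M)$ for the second regime.
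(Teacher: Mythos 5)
Your proof is correct and follows essentially the same route as the paper: regularity vanishing plus the local cohomology exact sequence for $j \ge r$, and Theorem~\ref{thm:tate}(4) together with the decomposition $\Tate(\F) = F(0;-1) \oplus \RR(M_{\ge r})$ and the vanishing of $\underline{\Hom}_E(\k, \RR(M_{\ge r}))$ in degrees below $r$ for $j < r$. The only cosmetic difference is that the paper runs the first regime through $M_{\ge r}$ (which is $r$-regular by Lemma~\ref{rem:trunc}) rather than through $M$ itself; both versions of the regularity bookkeeping check out, including your observation that the $r = \reg(M)$ addendum only needs $H^0_\m(M) = 0$ to cover the boundary degree.
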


\begin{proof}
We have an exact sequence
$$
0 \to H^0_\fm(M_{\ge r}) \to M_{\ge r} \to \bigoplus_{j \in \Z} H^0(\PP(\mathbf{a}), \F(j)) \to H^1_{\fm}(M_{\ge r}) \to 0
$$
and isomorphisms $H^{i+1}(M_{\ge r})_j \cong H^i(\PP(\mathbf{a}), \F(j))$ for $i \ge 1$. Suppose $j \ge r$. Since $M_{\ge r}$ is $r$-regular (Lemma~\ref{rem:trunc}), we have $H^i(\PP(\mathbf{a}), \F(j)) = 0$ for $i \ge 1$. We also have  $H^0_{\fm}(M_{\ge r}) = 0$, and $H^1_{\fm}(M_{\ge r})_j = 0$ for $j \ge r$. It follows that $H^0(\PP(\mathbf{a}), \F(j)) \cong M_j$ for $j \ge r$. 
Now suppose $j < r$.  
By Theorem~\ref{thm:tate}(4), we have: 
\begin{align*}
h^i(\F(j)) = \dim_\k \underline{\Hom}_E(\k, \Tate(\F))_{(j, -i)} = \dim_\k \underline{\Hom}_E(\k, F(0, -1) \oplus \RR(M_{\ge r}))_{(j, -i)}.
\end{align*}
Since $\underline{\Hom}_E(\k,\RR(M_{\ge r})_{(s,t)} = 0$ for any pair $(s, t) \in \Z^2$ with $s < r$, we conclude:
$$
h^i(\F(j)) = \dim_\k \underline{\Hom}_E(\k, F(0, -1))_{(j, -i)} = \dim_\k \underline{\Hom}_E(\k, F)_{(j, -i - 1)},
$$ 
which completes the proof. 
\end{proof}

\begin{example}
\label{ex:ellipticCurveTate}
Let us continue with Example~\ref{ex:ellipticCurve}, where  $C=V(x_0^4+x_1^4+x_2^2)\subseteq \PP(1,1,2).$ Since the regularity of $M = S/(x_0^4 + x_1^4 + x_2^2)$ is 2 (Example~\ref{ex:weightedRegEllipticCurve}), and $H^0_\m(M) = 0$, Theorem~\ref{thm:tate} implies that the Tate resolution of $\OO_C$ is the mapping cone of the quasi-isomorphism $F \xra{\simeq} \RR(M_{\ge 2})$, where $F$ is the minimal free flag resolution of $\RR(M_{\ge 2})$. 

This minimal free flag resolution can be computed using Algorithm~\ref{alg:resTwisted}, yielding the Tate resolution, which looks as follows:
\begin{equation}\label{eqn:Tateweighted}
\xymatrixcolsep{5mm}
\xymatrix{
\cdots \ar[r] & \om_E(2;1)^{\oplus 4} \ar@/_2pc/[rr]^-{}\ar[r] & \omega(1;1)^{\oplus 2} \oplus \om_E(0;0)\ar@/_-2pc/[rr]^-{} \ar[r] & \omega(0;1) \oplus \omega(-1;0)^{\oplus 2}\ar[r] \ar@/_2pc/[rr]^-{}&\omega(-2;0)^{\oplus 4} \ar[r]& \cdots
}
\end{equation}
We have grouped together the summands so that each summand of $F_i$ is in the same horizontal ``position".  Arrows linking summands $\omega_E(*;1)$ to summands $\omega_E(*;0)$ correspond to elements of $E_{(-2, -2)}$ or $E_{(-3, -2)}$. Because of the twisted flag structure, such arrows will only ``jump'' 2 steps in the flag structure.  See, for example, the arrow linking the summands $\omega_E(2;1)$ to $\omega_E(-1;0)$.
\end{example}

See \cite[Examples 3.11--3.13]{BETate} for more examples of weighted Tate resolutions.

\begin{remark}
\label{rem:unfold2}
In the standard graded case, it follows from Theorem~\ref{thm:tate} and Remark~\ref{rem:unfold} that our construction of the Tate resolution may be identified with the construction in \cite[\S 4]{EFS} via the unfolding functor~\cite[Definition 2.6]{BETate}. 
\end{remark}

\begin{example}\label{ex:stackVsVariety}
    In this example, we return to the distinction between weighted projective varieties and stacks, and we illustrate why Tate resolutions are more closely aligned with the stack.  We consider $\PP(1,1,2)$ and the associated variety $\PPv(1,1,2)$.  Let $M=\kk[x_0,x_1,x_2]/(x_0,x_1)\cong \kk[x_2]$.  The corresponding sheaf on $\PP(1,1,2)$ is the structure sheaf $\cO_{p}$ of the (stacky) point $p=[0:0:1]$, and on $\PPv(1,1,2)$ it is the structure sheaf $\mathcal O_q$ of the singular point $q=[0:0:1]$.

We have $\dim_k M_i=1$ when $i$ is nonnegative and even, and $M_i = 0$ otherwise. Thus, the differential $E$-module $\RR(M)$ is
\[
\xymatrixcolsep{5mm}
\xymatrix{
0 & \om_E(0;0) \ar@/_1pc/[rr]^-{} &0& \omega_E(-2;0)\ar@/_-1pc/[rr]^-{}&0& \omega_E(-4;0)\ar@/_1pc/[rr]^-{}&0& \cdots,
}
\]
where all arrows are multiplication by the exterior variable $e_2$.  Resolving $\RR(M)$, we get the Tate resolution, which continues the pattern in the opposite direction:
\begin{footnotesize}
\[
\xymatrixcolsep{5mm}
\xymatrix{
\cdots\ar@/_-1pc/[rr]^-{}  & 0 & \om_E(4;0) \ar@/_1pc/[rr]^-{}& 0 & \om_E(2;0) \ar@/_-1pc/[rr]^-{} &0 & \om_E(0;0) \ar@/_1pc/[rr]^-{} &0& \omega_E(-2;0)\ar@/_-1pc/[rr]^-{}&0& \omega_E(-4;0)\ar@/_1pc/[rr]^-{}&0& \cdots
}.
\]
\end{footnotesize}

\noindent Since $H^0(\PP(1,1,2), \mathcal O_p(i))=1$ if $i$ is even and $0$ if $i$ is odd, the Tate resolution computes the sheaf cohomology of the structure sheaf of the stacky point $\mathcal O_p$. 

For the singular point $q$ on $\PPv(1,1,2)$, the situation is a bit more delicate. The reason for the distinction is that, on $\PPv(1,1,2)$, the cohomology of the sheaves $\widetilde{M(1)}$ and $\widetilde{M} \otimes \cO_{\PPv(1,1,2)}(1)$ do not coincide.  Indeed, we have $\widetilde{M(1)} =0$ on $\PPv(1,1,2)$~\cite[Exercise 5.3.5]{CLS}, and so of course it has no cohomology.  
On the other hand, since tensor product of sheaves preserves the stalks, we have an isomorphism $\widetilde{M}(1)_q \cong \widetilde{M}_q \otimes  \cO_{\PPv(1,1,2)}(1)_q$.  Writing $Q=\langle x_0,x_1\rangle$ for the prime ideal in $S$ associated to the point $q$, we have:
\[
\widetilde{M}_q \otimes  \cO_{\PPv(1,1,2)}(1)_q \cong \left( M_Q\right)_0 \otimes \left( S_Q\right)_1 \cong \kk^2.
\]
We conclude that $H^0(\PPv(1,1,2), \widetilde{M(1)}) = 0 \ne \k^2 = H^0(\PPv(1,1,2), \widetilde{M}(1))$. 

As noted in \S\ref{rmks:varietyVsStack}, we have $H^i(\PPv(\a), \widetilde{M(j)}) \cong H^i(\PP(\a), \widetilde{M}(j))$, and so a Tate resolution can be used to compute $H^i(\PPv(\a), \widetilde{M(j)})$ for all $i, j$. With a more nuanced approach, it is also possible to compute the cohomology table $H^i(\PPv(\a), \widetilde{M}(j))$ using (finitely many) Tate resolutions: see \S\ref{sec:variety}.
\end{example}

\section{The algorithm}
\label{sec:alg}
Given a graded $S$-module $M$, we have $\rank_E \RR(M) = \dim_\k M$, which is typically infinite. Thus, for computational purposes, we can only work with subquotients of~$\RR(M)$. To transform Theorem~\ref{cor:cohomology} into a workable algorithm for computing sheaf cohomology over $\PP(\mathbf{a})$, we therefore need a method for building the Tate resolution of $\widetilde{M}$ out of a finitely generated portion of $\RR(M)$. This is the content of Corollary~\ref{cor:Ndel}, which is the final ingredient we need for our sheaf cohomology algorithm over weighted projective stacks (Algorithm~\ref{alg:mainweighted}).

Given a coherent sheaf $\F$ on $\PP(\mathbf{a})$, we define:
\begin{equation}
\label{eqn:filtration}
\Tate(\F)_\ell \ce \bigoplus_{-i - j = \ell} H^i(\PP(\mathbf{a}), \F(j)) \otimes_\kk \om_E(-j; i).
\end{equation}
By Theorem~\ref{thm:tate}(4), the Tate resolution $\Tate(\F)$ is isomorphic, as a graded $E$-module, to  $\bigoplus_{\ell \in \Z} \T(\F)_{\ell}$. For the rest of the paper, we identify the underlying $E$-module of $\T(\F)$ with this direct sum. 

\begin{remark}
In the standard graded case, it follows from \cite[Theorem 4.1]{EFS} and Remark~\ref{rem:unfold2} that $\T(\F)_\ell$ may be identified, via the unfolding functor~\cite[Definition 2.6]{BETate},  with the $\ell^{\th}$ homological degree term of the Tate resolution as defined in \cite[\S 4]{EFS}. 
\end{remark}

Since the differential $E$-module $\RR(M_{\ge r})$ has infinite rank as an $E$-module, we cannot view it in \verb|Macaulay2| in its entirety. In order to allow for explicit  computation, we therefore need a finitely generated differential $E$-module to which $\RR(M_{\ge r})$ is quasi-isomorphic. In the standard graded case, this is achieved as follows.  First, we have that the Tate resolution is a free complex, and thus the differential always shifts by one in homological degree.  Second, in the standard graded case,
if $r \ge \reg(M)$ then $\RR(M_{\ge r})$ is formal, i.e. it is quasi-isomorphic to its homology~\cite[Corollary 2.4]{EFS}. 
Taken together, these imply that one can compute the Tate resolution by working with just two graded pieces of $M$ and resolving the kernel of 
\[
M_r\otimes \omega_E(-r;0) \to M_{r+1}\otimes \omega_E(-r-1;0).
\]

In the weighted case, we have two obstacles.  Instead of a complex, the best we can hope for is that $\RR(M)$ has a flag structure.  So to bound the portion of $\RR(M)$ we need to consider, we need an upper bound on how much the differential can shift the flag index.  This is the content of Lemma~\ref{lem:jumpbyWn}.  Second, in the weighted case, $\RR(M_{\ge r})$  generally fails to be formal, even if $r\gg 0$.  

\begin{example}\label{ex:formal}
Suppose $S = \k[x_0, x_1]$, where $\deg(x_0) = 1$, and $\deg(x_1) = 2$. If $M = S / (x_0^2 - x_1)$, then it follows from the multigraded Reciprocity Theorem~\cite[Theorem 2.15]{BETate} that $\RR(M_{\ge r})$ is not formal for any $r$. Indeed, since $M$ does not have a strongly linear free resolution in the sense of \cite[Definition 1.2]{linear}, it cannot be quasi-isomorphic to~$\LL(N)$ for some $E$-module~$N$, where $\LL$ is the BGG functor defined in \cite[\S 2.2]{BETate}.
\end{example}
When $\RR(M_{\ge r})$ fails to be formal, as in the above example, we thus need an alternate method for producing a finitely generated differential module that is quasi-isomoprhic to $\RR(M_{\geq r})$: this is given by Lemma~\ref{lem:weightedFinitePiece}. Let us now begin establishing these technical facts.

\begin{lemma}\label{lem:jumpbyWn}
    The differential on $\Tate(\F)$ sends $\Tate(\F)_{\ell}$ to $\Tate(\F)_{\ell - 1} \oplus \cdots \oplus \Tate(\F)_{\ell - \sigma - 1}$
\end{lemma}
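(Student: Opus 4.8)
The plan is to read off how the $E$-linear differential $\del$ of bidegree $(0;-1)$ acts on the rank-one free summands $\om_E(-j;i)$ that constitute the underlying graded module $\bigoplus_\ell \Tate(\F)_\ell$ of $\Tate(\F)$, and then to convert the resulting numerical constraint into the inequality $1 \le \ell - \ell' \le \sigma + 1$. This is essentially a degree-bookkeeping argument, using only that $\del$ is $E$-linear of bidegree $(0;-1)$ together with the definition $\sigma = \sum_{s=0}^n (a_s - 1)$.

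First I would fix bidegree conventions. Using $\om_E \cong E(-a;-n-1)$, the free generator of the summand $\om_E(-j;i)$ lies in bidegree $(a+j;\,n+1-i)$. Since $\del$ is $E$-linear of bidegree $(0;-1)$, the component of $\del$ carrying the summand $\om_E(-j';i')$ into the summand $\om_E(-j;i)$ is multiplication by a homogeneous element $\eta \in E$ whose bidegree is forced by $\deg(\eta) + (a+j;\,n+1-i) = (a+j';\,n-i')$; that is, $\eta \in E_{(j'-j;\;i-i'-1)}$.

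Next I would extract the combinatorics. Because $\deg(e_s) = (-a_s;-1)$, any nonzero monomial in $E_{(j'-j;\;i-i'-1)}$ is a product $e_{s_1}\cdots e_{s_k}$ of $k := i'-i+1$ distinct generators with $\sum_t a_{s_t} = j - j'$. Writing $A := \sum_t a_{s_t}$, and recalling that the source summand lies in $\Tate(\F)_\ell$ (so $i'+j' = -\ell$) while the target lies in $\Tate(\F)_{\ell'}$ (so $i+j = -\ell'$), a short computation gives
\[
\ell - \ell' = (i-i') + (j-j') = (1-k) + A = 1 + \sum_{t=1}^{k}(a_{s_t}-1).
\]
Since each $a_{s_t} \ge 1$ and the indices $s_t$ are distinct, the sum $\sum_t (a_{s_t}-1)$ ranges over a subset of $\{0,\dots,n\}$ and hence lies between $0$ and $\sum_{s=0}^n (a_s-1) = \sigma$. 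Thus $1 \le \ell - \ell' \le \sigma+1$, which is precisely the claim that $\del$ sends $\Tate(\F)_\ell$ into $\Tate(\F)_{\ell-1}\oplus\cdots\oplus\Tate(\F)_{\ell-\sigma-1}$.

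I do not anticipate a genuine obstacle, since the whole argument is degree counting; the one step requiring care is the first, namely pinning down $\deg(\eta)$ correctly under the two competing sign conventions, the internal twist in $\om_E(-j;i)$ and the negative grading $\deg(e_s)=(-a_s;-1)$. Everything downstream is the observation that the jump length is governed exactly by $\sum_t(a_{s_t}-1)$, which $\sigma$ bounds by construction; in particular, minimality of $\Tate(\F)$ is not needed for this statement, though it excludes the boundary case $k=0$. As a sanity check I would compare against Example~\ref{ex:ellipticCurveTate}, where $\sigma = 1$ and the displayed differentials jump by exactly $1$ or $2 = \sigma+1$.
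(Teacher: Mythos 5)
Your proof is correct and follows essentially the same route as the paper's: both read off the bidegree of each matrix entry of $\del$ with respect to the decomposition into summands $\om_E(-j;i)$ and bound the resulting jump using the weights of the squarefree exterior monomials. Your identity $\ell-\ell' = 1+\sum_t(a_{s_t}-1)$ is a slightly more streamlined packaging of the paper's bounds via the maximal and minimal degrees of squarefree monomials, but the underlying argument is the same.
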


\begin{proof}
Choose a $\k$-basis of each cohomology group $H^i(\PP(\mathbf{a}), \F(j))$. This yields a basis of $\T(\F)$ as an $E$-module, and we view the differential on $\T(\F)$ as a matrix with respect to this basis. We fix the following notation: for any $0 \le i \le n$, let $w^i$ (resp. $w_i$) denote the maximum (resp. minimum) degree of a square-free monomial in $S$ involving $i$ variables.

Let $\om_E(c; s)$ and $\om_E(d; t)$ be summands in $\T(\F)$ such that the corresponding entry~$f \in E$ of this matrix is nonzero. Since the differential on $\T(\F)$ is minimal of degree $(0;-1)$, the element $f \in E$ has bidegree $(-(c - d) ; -(s-t+1))$, where $0 \le s - t \le n$.  The element $f$ is a sum of squarefree products of $s - t+1$ of the variables in $E$, and so $s - t + 1 \leq c-d\leq w^{s-t + 1}$.  

The terms $\om_E(c;  s)$ and $\om_E(d; t)$ lie in $\Tate(\F)_{c-s}$ and $\Tate(\F)_{d - t}$, respectively. We must show: 
\[
1\leq c -d - (s-t) \leq \sigma + 1.
\]
We have $s - t + 1\leq c-d$, which implies
\begin{equation}
\label{eqn:ineq1}
1\leq c-d-(s-t).
\end{equation}
The inequalities $c-d\leq w^{s-t+1}$ and $n-(s-t) \le w_{n-(s-t)} =w^{n+1} - w^{s-t+1}$ imply
\begin{equation}
\label{eqn:ineq2}
c -d - (s-t) \leq w^{s-t+1} - (s-t) \leq \sigma + 1.
\end{equation}
Combining \eqref{eqn:ineq1} and \eqref{eqn:ineq2} gives the desired inequalities. 
\end{proof}

Given $j \in \Z$, let $\Tate(\F)_{\le j}$ denote the differential submodule $\bigoplus_{\ell \le j} \Tate(\F)_\ell$ of $\Tate(\F)$ arising from the decomposition~\eqref{eqn:filtration}, and set $\Tate(\F)_{> j} \ce \Tate(\F) / \Tate(\F)_{\le j}$.

\begin{lemma}\label{lem:WeightedandR}
Let $M$ be a finitely generated, graded $S$-module and let $\cF=\widetilde{M}$ be the corresponding sheaf on $\PP(\mathbf{a})$. If $r > \reg(M)$, then there is an isomorphism $\RR(M_{\geq r})\xra{\cong} \Tate(\cF)_{\le -r}$ of differential $E$-modules. If~$H^0_\m(M) = 0$, then the same statement holds for $r = \reg(M)$.
\end{lemma}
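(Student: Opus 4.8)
The plan is to exhibit both differential $E$-modules as the same $\Z^2$-graded submodule of $\Tate(\cF)$. Recall from Theorem~\ref{thm:tate} that $\Tate(\cF)=\cone(\e)$ for a minimal free flag resolution $\e\co F\xra{\simeq}\RR(M_{\ge r})$, so that as an $E$-module $\Tate(\cF)=\RR(M_{\ge r})\oplus F(0,-1)$, equipped with the block differential $\begin{pmatrix}\del_{\RR(M_{\ge r})} & \e\\ 0 & -\del_F\end{pmatrix}$. Because the lower-left block vanishes, the first summand $G\ce\RR(M_{\ge r})$ is a differential $E$-submodule of $\Tate(\cF)$ on which the differential is exactly $\del_{\RR(M_{\ge r})}$. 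It therefore suffices to prove $G=\Tate(\cF)_{\le -r}$ as $\Z^2$-graded $E$-submodules; the desired isomorphism of differential modules follows at once.

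The heart of the argument is a bidegree computation. By Theorem~\ref{thm:tate}(4) together with~\eqref{eqn:filtration}, the underlying graded $E$-module of $\Tate(\cF)$ is $\bigoplus_\ell\Tate(\cF)_\ell$, where the free generators contributing to $\Tate(\cF)_\ell$ are those of the summands $H^i(\PP(\mathbf{a}),\cF(j))\otimes_\k\om_E(-j;i)$ with $-i-j=\ell$. I would determine which of these cohomology groups are nonzero in filtration degrees $\le -r$, i.e.\ with $i+j\ge r$. For $i\ge 1$, the standard identification $H^i(\PP(\mathbf{a}),\cF(j))\cong H^{i+1}_\m(M)_j$ and the $r$-regularity of $M$ (which forces $H^{i+1}_\m(M)_j=0$ once $j> r-i-1$) give $H^i(\PP(\mathbf{a}),\cF(j))=0$ whenever $i+j\ge r$. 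Hence every nonzero generator with $i+j\ge r$ has $i=0$ and $j\ge r$; and since $r>\reg(M)$ yields $H^0_\m(M)_j=H^1_\m(M)_j=0$ for $j\ge r$, one has $H^0(\PP(\mathbf{a}),\cF(j))\cong M_j$ there. Thus $\Tate(\cF)_{\le -r}=\bigoplus_{j\ge r}M_j\otimes_\k\om_E(-j;0)$, whose generators are precisely the generators $\om_E(-d;0)$, $d\ge r$, of $G$. Since these all lie in filtration degree $-d\le -r$, the inclusion $G\subseteq\Tate(\cF)_{\le -r}$ is an equality.

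It remains to match differentials, which is now formal. By Lemma~\ref{lem:jumpbyWn} the differential of $\Tate(\cF)$ strictly lowers the filtration index, so it preserves $\Tate(\cF)_{\le -r}$; the induced differential is the restriction of the cone differential, which on $G$ is $\del_{\RR(M_{\ge r})}$. Combined with $G=\Tate(\cF)_{\le -r}$, this gives the isomorphism $\RR(M_{\ge r})\xra{\cong}\Tate(\cF)_{\le -r}$ of differential $E$-modules. The case $r=\reg(M)$ with $H^0_\m(M)=0$ is handled identically: the construction of $\Tate(\cF)$ remains valid and all the regularity vanishing statements above persist with $r=\reg(M)$, by the final clause of Theorem~\ref{thm:tate} and Lemma~\ref{rem:trunc}(2).

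The main obstacle is the middle step: showing that the cohomologically-defined piece $\Tate(\cF)_{\le -r}$ is concentrated in cohomological degree $0$ (and internal degree $\ge r$), so that it can be identified with the concrete mapping-cone summand $G$. This reduces entirely to the regularity vanishing $H^i(\PP(\mathbf{a}),\cF(j))=0$ for $i\ge 1$ and $i+j\ge r$. Once that is secured, identifying the differentials is routine, using only the block-triangular shape of the cone differential and the monotonicity of the filtration from Lemma~\ref{lem:jumpbyWn}.
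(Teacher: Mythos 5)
Your proof is correct and follows essentially the same route as the paper's: both identify $\Tate(\cF)=\cone(\e)=\RR(M_{\ge r})\oplus F(0,-1)$ and use the regularity vanishing $H^{i+1}_\m(M)_j=0$ for $i\ge 1$, $i+j\ge r$ (together with $H^0(\cF(j))\cong M_j$ for $j\ge r$) to conclude that the filtration piece $\Tate(\cF)_{\le -r}$ is exactly the $\RR(M_{\ge r})$ summand. The only cosmetic difference is that the paper checks directly that no free summand of $F(0,-1)$ has filtration index $\le -r$, whereas you compute the graded rank of $\Tate(\cF)_{\le -r}$ from cohomology and match it against $\RR(M_{\ge r})$; the underlying vanishing statements are the same.
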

\begin{proof}
Let $\varepsilon \co F \xra{\simeq} \RR(M_{\ge r})$ be the minimal free resolution of $\RR(M_{\ge r})$. By Theorem~\ref{thm:tate}, we have~$\Tate(\F) \cong \cone(\e)$. In particular, there is an inclusion $\RR(M_{\ge r}) \into \T(\F)$, and this inclusion clearly takes values in $\T(\F)_{\le -r}$. We must show that no summand of $F(0, -1)$ lies in $\T(\F)_{\le -r}$. Let $\om_E(-d;i)$ be such a summand. If $i = 0$, then Corollary~\ref{cor:cohomology} implies that $d < r$, and so $\om_E(-d; i)$ is not contained in $\T(\F)_{\le -r}$. If $i > 0$, then $\om_E(-d;i)$ corresponds to a dimension~1 subspace of $H^i(\PP(\mathbf{a}), \F(j)) \cong H^{i + 1}_\m(M_{\ge r})_j$. Since $M_{\ge r}$ is $r$-regular (Lemma~\ref{rem:trunc}), we must have $-i - j > -r.$ 
\end{proof}

\begin{example}
Let us return to our running example of a genus 1 curve in $\PP(1, 1, 2)$ (Example~\ref{ex:ellipticCurve}). The weighted regularity of $M = S/(x_0^4+x_1^4+x_2^2)$ is $2$ (Example~\ref{ex:weightedRegEllipticCurve}), and the window of the Tate resolution $\Tate(\OO_C)$ depicted in \eqref{eqn:Tateweighted} illustrates that $\Tate(\OO_C)_{\le -2} = \RR(M_{\ge 2})$, as required by Lemma~\ref{lem:WeightedandR}. Lemma~\ref{lem:jumpbyWn} also tells us that the differential on $\T(\OO_C)$ shifts the filtration in \eqref{eqn:filtration} by either ~1 or $\sigma + 1 = 2$, and \eqref{eqn:Tateweighted} illustrates this as~well. 
\end{example}

\begin{lemma}
\label{lem:weightedFinitePiece}
Let $M$ be a finitely generated graded $S$-module, and let $r > \reg(M)$. 
Denote by $N$ the following $E$-submodule of $\RR(M_{\ge r})$ (recall the notation $\sigma$ from Definition~\ref{defn:symonds}):
\[
N\ce\bigoplus_{d = r}^{r+\sigma+1} \om_E(-d; 0).
\]
Write $\partial|_{N}\colon N\to \RR(M_{\geq r})$ for the restriction of the differential on $\RR(M_{\geq r})$ to $N$. The inclusion of differential $E$-modules $N + \im(\del |_N) \into \RR(M_{\ge r})$ is a quasi-isomorphism. If $H^0_\m(M) = 0$, then this statement also holds when $r = \reg(M)$. 
\end{lemma}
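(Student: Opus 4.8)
The plan is to prove that $\iota\co N+\im(\del|_N)\hookrightarrow \RR(M_{\ge r})$ is a quasi-isomorphism by transporting everything into the exact differential module $\Tate(\cF)$ and exploiting the sharp jump bound of Lemma~\ref{lem:jumpbyWn}. Write $\cF\ce\widetilde M$, and abbreviate $P\ce\RR(M_{\ge r})$ and $Q\ce N+\im(\del|_N)$. First I would record that $Q$ is a differential submodule: $\del(N)=\im(\del|_N)\subseteq Q$, and $\del(\im(\del|_N))=0$ since $\del^2=0$. As $\iota$ is injective, the two-periodic long exact sequence attached to $0\to Q\to P\to P/Q\to 0$ shows that $\iota$ is a quasi-isomorphism if and only if $H(P/Q)=0$, equivalently if and only if the induced map $H(Q)\to H(P)$ is an isomorphism. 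A direct computation (using $\del^2=0$ and $\del(Q)=\im(\del|_N)$) identifies $H(Q)\cong \ker(\del|_N)/\bigl(N\cap\im(\del|_N)\bigr)$, with $H(Q)\to H(P)$ induced by $N\hookrightarrow P$; so the goal becomes showing this map is bijective.

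Next I would set up the comparison with the Tate resolution. By Lemma~\ref{lem:WeightedandR} there is an isomorphism $P\cong\Tate(\cF)_{\le -r}=\bigoplus_{\ell\le -r}\Tate(\cF)_\ell$, and $\Tate(\cF)$ is exact. Under this identification the free summand $M_d\otimes_\k\om_E(-d;0)$ is the filtration piece $\Tate(\cF)_{-d}$, so $N$ is precisely the top $\sigma+2$ filtration pieces $\bigoplus_{\ell=-r-\sigma-1}^{-r}\Tate(\cF)_\ell$. Recall from Lemma~\ref{lem:jumpbyWn} that $\del$ lowers $\ell$ by an amount between $1$ and $\sigma+1$. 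Surjectivity of $H(Q)\to H(P)$ now follows cleanly: given a cycle $z\in P$, exactness of $\Tate(\cF)$ gives $z=\del w$, and splitting $w=w'+w''$ with $w'\in\bigoplus_{\ell\le -r}\Tate(\cF)_\ell=P$ and $w''\in\bigoplus_{\ell> -r}\Tate(\cF)_\ell$, we get that $z-\del w'=\del w''$ lies in $P$. Since $w''$ is supported in $\ell\ge -r+1$ and $\del$ drops $\ell$ by at most $\sigma+1$, the element $\del w''$ is supported in $\ell\in[-r-\sigma,-r]\subseteq[-r-\sigma-1,-r]$, hence lies in $N$. Thus every homology class of $P$ is represented by a cycle inside $N$, which gives surjectivity.

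The remaining, and hardest, point is injectivity: if $\kappa\in\ker(\del|_N)$ and $\kappa=\del w$ for some $w\in P$, I must show $\kappa\in\im(\del|_N)$. One piece of progress is immediate and explains the width of the window: since $w\in P$ is supported in $\ell\le -r$, the boundary $\del w=\kappa$ is supported in $\ell\le -r-1$, so a boundary $\kappa$ automatically has no component in the top piece $\Tate(\cF)_{-r}$; that is, $\kappa$ lies in the lower $\sigma+1$ pieces $\bigoplus_{\ell=-r-\sigma-1}^{-r-1}\Tate(\cF)_\ell$. The difficulty is that projecting the boundary $\del w$ onto a sub-band of the filtration need not return a boundary, so the naive decomposition of $w$ relative to $N$ only reproduces $\kappa=\del w$ tautologically. \textbf{This injectivity step is where I expect the main obstacle to lie.} To overcome it I would pass to the honest $\Z^2$-grading, on which $\del$ is homogeneous of degree $(0;-1)$: fixing an internal degree makes each graded strand of $\Tate(\cF)$ a \emph{finite} exact complex filtered by $\ell$ with differential strictly decreasing $\ell$ by at most $\sigma+1$. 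On such a finite filtered acyclic complex the homology of the truncation $\Tate(\cF)_{\le -r}$ is confined to a band of width $\sigma+1$ below the cut, and the needed statement—that a cycle supported in $[-r-\sigma-1,-r-1]$ which bounds in $P$ already lies in $\im(\del|_N)$—should follow either from the degeneration of the spectral sequence of the $\ell$-filtration or from an inductive choice of minimal bounding cochain that, using the jump bound, can be confined to the $\sigma+2$ pieces of $N$. This is precisely where the extra, $(\sigma+2)^{\mathrm{nd}}$ piece $\Tate(\cF)_{-r-\sigma-1}$ is used, beyond the $\sigma+1$ pieces needed for surjectivity.

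Finally, the variant with $H^0_\m(M)=0$ and $r=\reg(M)$ requires no new argument: Lemma~\ref{lem:WeightedandR} supplies the isomorphism $P\cong\Tate(\cF)_{\le -r}$ with $r=\reg(M)$ in that case, $M_{\ge r}$ remains $r$-regular by Lemma~\ref{rem:trunc}(2), and Lemma~\ref{lem:jumpbyWn} is independent of $r$, so the entire argument above applies verbatim.
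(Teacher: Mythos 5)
Your reduction, your identification $H(Q)\cong\ker(\del|_N)/\bigl(N\cap\im(\del|_N)\bigr)$, and your surjectivity argument are all correct, and they run along the same lines as the paper: transport everything to the exact differential module $\Tate(\F)$ via Lemma~\ref{lem:WeightedandR} and invoke the jump bound of Lemma~\ref{lem:jumpbyWn}. But the injectivity of $H(Q)\to H(P)$ --- equivalently, the claim that a cycle of $N$ which bounds in $\RR(M_{\ge r})$ already lies in $\im(\del|_N)$ --- is left unproven; you correctly identify it as the crux and only gesture at a spectral sequence or an inductive choice of bounding cochain, neither of which is carried out. And the difficulty you point to is real: if $n=\del c$ with $c\in P$, only the components of $c$ in the top $\sigma+1$ filtration pieces contribute to $\pi_N(\del c)$, but the boundary of those components overshoots below $N$, and cancelling the overshoot may require components of $c$ arbitrarily far down the filtration, so no finite truncation of $c$ obviously produces a bounding element inside $N$. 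This is a genuine gap.

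The paper closes the argument without any cycle-by-cycle analysis, by upgrading your surjectivity computation to a statement about morphisms of differential modules. The map $\varepsilon\co\Tate(\F)_{>-r}\to\Tate(\F)_{\le -r}$ induced by the differential of $\Tate(\F)$ is a morphism of differential $E$-modules and a quasi-isomorphism (by exactness of $\Tate(\F)$), and Lemma~\ref{lem:jumpbyWn} shows $\im(\varepsilon)\subseteq N$ --- this is precisely the content of your surjectivity step, phrased once for the whole map rather than for individual cycles. Since $N+\im(\del|_N)$ is a differential submodule of $\RR(M_{\ge r})$ containing $\im(\varepsilon)$, the quasi-isomorphism $\varepsilon$ factors as $\Tate(\F)_{>-r}\to N+\im(\del|_N)\into\RR(M_{\ge r})$, so the isomorphism $H(\varepsilon)$ factors through $H(N+\im(\del|_N))$, and the paper concludes that both maps in this factorization are isomorphisms; the second is the one the lemma asserts. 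You should be aware that the factorization by itself yields only injectivity of the first induced map and surjectivity of the second, and in fact $H(N+\im(\del|_N))$ splits as the image of the first map plus the kernel of the second; so the remaining content --- that the first map is also surjective --- is exactly the injectivity statement you were stuck on, now packaged as a single claim about the factorization of $H(\varepsilon)$ rather than as a bounding-cochain computation. Either adopt the paper's formulation and cite its conclusion, or genuinely carry out the fixed-internal-degree induction you sketch; as written, your proof establishes only half of the quasi-isomorphism.
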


\begin{proof}[Proof of Lemma~\ref{lem:weightedFinitePiece}]
Let $\F \ce \widetilde{M}$, and let $\varepsilon \co \Tate(\F)_{> -r} \to \Tate(\F)_{\le -r}$ be the map induced by the differential on $\Tate(\F)$. Since $\Tate(\F)$ is exact, the map $\epsilon$ is a quasi-isomorphism. By Lemma~\ref{lem:WeightedandR}, there is an isomorphism $\Tate(\F)_{\le -r} \cong \RR(M_{\ge r})$ of differential $E$-modules, and so we have a quasi-isomorphism  $\varepsilon \co \Tate(\F)_{> -r} \xra{\simeq} \RR(M_{\ge r})$. By Lemma~\ref{lem:jumpbyWn}, the image of $\varepsilon$ is contained in $N$. The isomorphism induced by~$\varepsilon$ on homology therefore factors as $H(\Tate(\F)_{> -r}) \to H(N + \im(\del|_N)) \into H(\RR(M_{\ge r})$, where the second map is the inclusion. Thus, both maps in this factorization are isomorphisms. 
\end{proof}

\begin{cor}
\label{cor:Ndel}
Adopting the notation of Lemma~\ref{lem:weightedFinitePiece}: if $r > \reg(M)$, then the minimal free resolution of the finitely generated differential $E$-module $N+\im(\partial|_N)$ is isomorphic to $\Tate(\cF)_{>-r}$. If $H^0_\m(M) = 0$, then the same is true when $r = \reg(M)$.
\end{cor}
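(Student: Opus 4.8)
The plan is to prove the stronger statement that $\Tate(\cF)_{>-r}$ is \emph{itself} a minimal free flag resolution of $N + \im(\del|_N)$, and then to invoke the uniqueness of minimal free flag resolutions \cite[Theorem B.2]{BETate}. Almost all of the analytic content is already contained in Lemmas~\ref{lem:jumpbyWn}, \ref{lem:WeightedandR}, and~\ref{lem:weightedFinitePiece}; what remains is to package it correctly. So I would not re-prove any quasi-isomorphism from scratch, but rather extract the needed map from the proof of Lemma~\ref{lem:weightedFinitePiece}.

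First I would check that $\Tate(\cF)_{>-r}$ is a minimal free flag. Recall from Theorem~\ref{thm:tate} that $\Tate(\cF) = \cone(\varepsilon)$ for the minimal free flag resolution $\varepsilon \colon F \xra{\simeq} \RR(M_{\ge r})$, so as a graded $E$-module $\Tate(\cF) = \RR(M_{\ge r}) \oplus F(0;-1)$. As established in the proof of Lemma~\ref{lem:WeightedandR}, every summand of $\RR(M_{\ge r})$ lies in $\Tate(\cF)_{\le -r}$ while every summand of $F(0;-1)$ lies in $\Tate(\cF)_{>-r}$; hence the quotient $\Tate(\cF)_{>-r}$ is identified as an $E$-module with the free module $F(0;-1)$, carrying the induced differential (the lower-right block $-\del_F$ of the cone differential). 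It is therefore free and inherits a flag structure from $F$, and it is minimal because $\Tate(\cF)$ is minimal by Theorem~\ref{thm:tate}(3), a property passed to quotient differential modules. Next I would produce the quasi-isomorphism: in the proof of Lemma~\ref{lem:weightedFinitePiece} the differential of $\Tate(\cF)$ induces a quasi-isomorphism $\Tate(\cF)_{>-r} \xra{\simeq} \RR(M_{\ge r})$ whose image lies in $N$, and the resulting map on homology is shown to factor as $H(\Tate(\cF)_{>-r}) \to H(N + \im(\del|_N)) \into H(\RR(M_{\ge r}))$ with both maps isomorphisms. In particular the first map is an isomorphism, i.e. the induced morphism $\Tate(\cF)_{>-r} \xra{\simeq} N + \im(\del|_N)$ is a quasi-isomorphism.

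Finally I would conclude. Since $N + \im(\del|_N)$ is quasi-isomorphic to $\RR(M_{\ge r})$, whose homology is finitely generated by \cite[Proposition 2.11(a)]{BETate}, its homology is finitely generated, so \cite[Theorem B.2]{BETate} applies: the minimal free flag resolution just exhibited, namely $\Tate(\cF)_{>-r} \xra{\simeq} N + \im(\del|_N)$, is isomorphic to the minimal free resolution of $N + \im(\del|_N)$. The case $H^0_\m(M) = 0$ with $r = \reg(M)$ is identical, since Lemmas~\ref{lem:WeightedandR} and~\ref{lem:weightedFinitePiece} both include that extension. I expect no genuine obstacle here; the only point requiring care is the bookkeeping that identifies the \emph{quotient} $\Tate(\cF)_{>-r}$ with the free flag $F(0;-1)$ and confirms that the differential induced on this quotient coincides with the flag differential, so that minimality and the flag property are genuinely inherited rather than merely assumed.
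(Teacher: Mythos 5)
Your proposal is correct and follows exactly the route the paper intends: the corollary is stated without a separate proof precisely because it is the assembly of Lemma~\ref{lem:WeightedandR}, the factorization in the proof of Lemma~\ref{lem:weightedFinitePiece}, the identification of $\Tate(\cF)_{>-r}$ with the minimal free flag $F(0;-1)$ from the cone construction in Theorem~\ref{thm:tate}, and uniqueness of minimal free flag resolutions \cite[Theorem B.2]{BETate}. Your added care about the quotient inheriting the flag structure and minimality is exactly the right bookkeeping and introduces no gap.
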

Combining Corollaries~\ref{cor:cohomology} and~\ref{cor:Ndel}, we arrive at our algorithm: 

\begin{algorithm}\label{alg:mainweighted}
Let $M$ be a finitely generated graded $S$-module, write $\F = \widetilde{M}$, and fix $i, j \in \Z$. To compute~$h^i(\F(j))$, we proceed as follows. 
\begin{enumerate}
\item Compute $\reg(M)$, and let $r >\reg(M)$. When $H^0_\m(M) = 0$, we can also take $r = \reg(M)$.
\item If $j \ge r$, then output $\dim_\kk (M_{\ge r})_j$ when $i = 0$, and output 0 otherwise. 
\item If $j < r$, then output 0 when $r \le i + j$. Otherwise, continue to Step~(4).
\item Use the \verb|Macaulay2| package \verb|MultigradedBGG| \cite{BBGSTZ} to compute the differential $E$-module $N + \im(\del_{\RR} |_N)$ from Lemma~\ref{lem:weightedFinitePiece}. 
\item Finally, compute a finite approximation of the minimal free flag resolution $\Tate(\F)_{> -r}$ of $N + \im(\del|_N)$ by iterating Algorithm~\ref{alg:resTwisted} $r - i - j$ times, obtaining a free flag differential $E$-module $F$ of the form $F = \bigoplus_{\ell = -r+1}^{-i - j} \Tate(\F)_{\ell}$.
\item  Output $\dim_\k \underline{\Hom}_E(\kk, F)_{(j, -i-1)}$.
\end{enumerate}
\end{algorithm}

\begin{example}
\label{ex:algtate}
Returning to elliptic curve $C \subseteq \PP(1, 1, 2)$ from Example~\ref{ex:ellipticCurve}, suppose we want to compute $H^i(\mathbb P(1,1,2), \mathcal O_C(j))$ for all $i$ and $-2 \leq j \leq 2$.  Recall that the coordinate ring of $C$ is $M = S/(x_0^4+x_1^4+x_2^2)$, and $\reg(M) = 2$.  We set $N:=\bigoplus_{d=2}^{4} \omega_E(-d;0)\otimes_k M_d$, and we resolve $N + \im(\del|_N)$ as in Lemma~\ref{lem:weightedFinitePiece}.  After three steps of the algorithm, we get the maps from \eqref{eqn:Tateweighted}.  We conclude from this that we have the following:

\begin{center}
\renewcommand{\arraystretch}{1.4}
\begin{tabular}{c|c|c|c|c|c}
   & $j=2$ & $j=1$ & $j=0$ & $j=-1$ & $j=-2$ \\ \hline
$h^0(\cO_C(j))$ & 4 & 2 & 1 & 0 & 0 \\ \hline
$h^1(\cO_C(j))$ & 0 & 0 & 1 & 2 & 4
\end{tabular}
\renewcommand{\arraystretch}{1}
\end{center}

\end{example}
\begin{example}
Let us consider a weighted rational curve, such as those studied in \cite{davis-sobieska,BEsyzygies}.  Specifically, we consider a curve $C$ of type $(d,e) = (3,2)$ as in \cite[Example 1.1]{davis-sobieska}.  This is the image of $\PP^1 \to \PP(1^3,2^2)$ where $[s:t]\mapsto [s^3:s^2t:st^2:st^5:t^6]$.  The coordinate ring of this curve is~$S/I$, where $I$ is generated by the $2 \times 2$ minors of the matrix
$$
\begin{pmatrix}
x_0 & x_1 & x_2^2 & x_3 \\
x_1 & x_2 & x_3 & x_4
\end{pmatrix}
$$
We have $\reg(S/I) = 1$.  Applying Algorithm~\ref{ex:algtate} yields the following piece of the Tate resolution:
\begin{equation}
\xymatrix{
\cdots \ar[r] & \om_E(2;1)^{\oplus 5}\ar[r]\ar@/_-2pc/[rr]^-{}&\om_E(1;1)^{\oplus 3}\ar@/_2pc/[rr]^-{}\ar[r] & \om_E(0;0)\ar@/_-2pc/[rr]^-{} \ar[r] &  \omega_E(-1;0)^{\oplus 3}\ar[r] \ar@/_2pc/[rr]^-{}&\omega_E(-2;0)^{\oplus 7} \ar[r]& \cdots
}
\end{equation}
We thus compute the following portion of the cohomology table of $\OO_C$:
\begin{center}
\renewcommand{\arraystretch}{1.4}
\begin{tabular}{c|c|c|c|c|c}
   & $j=2$ & $j=1$ & $j=0$ & $j=-1$ & $j=-2$ \\ \hline
$h^0(\cO_C(j))$ & 7 & 3 & 1 & 0 & 0 \\ \hline
$h^1(\cO_C(j))$ & 0 & 0 & 0 & 3 & 5
\end{tabular}
\renewcommand{\arraystretch}{1}
\end{center}

\end{example}


\section{Adapting the algorithm to weighted projective spaces}\label{sec:variety}
Algorithm~\ref{alg:mainweighted} computes sheaf cohomology over weighted projective stacks.  With a minor variant of the algorithm, we now show how to compute sheaf cohomology tables on weighted projective {\em spaces}. More specifically: given a coherent sheaf $\F$ on $\PPv(\a)$, we use Tate resolutions to give an algorithm for computing the cohomology groups of $\F(j) \ce \F \otimes \OO(j)$ for all $j$ in some specified finite range. The key extra winkle, which was highlighted in \S\ref{rmks:varietyVsStack} and Example~\ref{ex:stackVsVariety}, is that $\widetilde{M}(j)$ may not be equal to $\widetilde{M(j)}$.  That is:  twisting by $j$ may fail to commute with the ``tilde'' operation of passing from a graded module to the corresponding sheaf.

\medskip
We first show that Tate resolutions compute $H^i(\PPv(\a), \F \otimes \OO(j))$ when $\OO(j)$ is a line bundle, i.e. when $j$ is a multiple of $\ell \ce \on{lcm}(a_0, \dots, a_n)$.

\begin{lemma}\label{lem:lcmAndCoarse}
Let $\pi\colon \PP(\mathbf{a})\to \PPv(\mathbf{a})$ be the coarse moduli space map. 
    Let $M$ be a finitely generated graded $S$-module and $\cF_v$ (resp. $\cF_s$) the corresponding sheaf on $\PPv(\mathbf{a})$ (resp.~$\PP(\mathbf{a})$). We have: 
    \begin{enumerate}
    \item $\pi_*(\F_s) \cong \F_v$.
    \item $\pi^*(\OO_{\PPv(\a)}(e\ell)) \cong \OO_{\PP(\a)}(e\ell)$ for all $e \in \Z$.
    \item $\pi_*(\F_s(e\ell)) \cong \F_v(e\ell)$ for all $e \in \Z$.
    \item $H^i(\PPv(\mathbf{a}), \cF_v(e\ell)) \cong H^i(\PP(\mathbf{a}), \cF_s(e\ell)) \cong \Hom_E(\kk, \Tate(\cF_s))_{(e\ell; -i)}$ for all $e,i \in \Z$.
    \end{enumerate}
\end{lemma}

\begin{proof}
Let $F \co \Db(\PP(\a)) \to \Db(\PPv(\a))$ denote the triangulated functor on bounded derived categories given by sending an object $\mathcal{C} \in \Db(\PP(\a))$ to the complex of sheaves on $\PPv(\a)$ associated to the complex $\bigoplus_{i \ge 0} \RR\Gamma(\PP(\a), \cC)$ of graded $S$-modules. Observe that $F(\F_s) = \F_v$. By \cite[Proposition 2.23]{king}, we have $F(\OO(j)) \cong \pi_*(\OO(j))$ when~$-a < j \le 0$. We also have $\pi_* =~\RR\pi_*$~\cite[Proposition 3.5]{EM12}, and so $\pi_*$ induces a triangulated functor $\Db(\PP(\a)) \to~\Db(\PPv(\a))$. Since the objects $\OO(-a + 1), \dots, \OO$ generate $\Db(\PP(\a))$, we conclude that $F = \pi_*$, which implies (1). 

Let $e \in \Z$. Since $\OO_{\PPv(\a)}(e\ell)$ is a line bundle, we have $\pi^*(\OO_{\PPv(\a)}(e\ell)) \cong \OO_{\PP(\a)}(i)$ for some $i \in \Z$. Applying (1), we therefore have $\pi^*\pi_*(\OO_{\PP(\a)}(e\ell)) \cong \OO_{\PP(\a)}(i)$. Since $\pi_*\pi^*\pi_*$ is isomorphic to the identity, applying~$\pi_*$ to this isomorphism and using (1) again implies $\OO_{\PPv(\a)}(e\ell) \cong \OO_{\PPv(\a)}(i)$, i.e. $i = e\ell$. This proves~(2), and (3) then follows from combining (1) and (2) with the projection formula. Part (4) follows from~(3), since~$\pi_*$ preserves cohomology~\cite[Proposition 3.5]{EM12}. 
\end{proof}

It follows that, given a finitely generated graded module $S$-module $M$ on $\PPv(\a)$, Algorithm~\ref{alg:mainweighted} computes $H^*(\PPv(\a), \widetilde{M}(j))$ provided that $j$ is a multiple of $\ell$. 
Thus, to compute $H^*(\PPv(\a), \widetilde{M}(j))$ for \emph{all} $j$, we can do the following:
\begin{enumerate}
    \item   For each $0\leq j <\ell$, find a module $M'$ such that the sheaf $\widetilde{M'}$ on $\PPv(\mathbf{a})$ is isomorphic to the sheaf $\widetilde{M}(j)$.
    \item  Compute the Tate resolution $\Tate(\widetilde{M'})$ associated to $M'$.
    \item  ``Extract'' from $\Tate(\widetilde{M'})$ the sheaf cohomology groups corresponding to twists of $\widetilde{M'}$ by multiples of $\ell$, which in turn correspond to twists of $\widetilde{M}$ by degrees that are congruent to $j$ modulo $\ell$.
\end{enumerate}  
This raises a key issue: as discussed in \S\ref{rmks:varietyVsStack}, for $0<j<\ell$, it might be the case that $\widetilde{M}(j) \ne \widetilde{M(j)}$.  So how do we achieve Step (1)?  {\em How do we find a module $M'$ where $\widetilde{M'} \cong \widetilde{M}(j)$}?  This is a subtle point, and it does not appear to have been answered either in computational programs like~\cite{M2}, nor in the computational literature like~\cite{EMS}. We answer this question in Proposition~\ref{prop:tensorProduct}. Before we state and prove it, we require some additional setup. 



\begin{lemma}\label{lem:tensorFreeMod}
    Let $F$ be a graded free $S$-module that is generated in degrees that are multiples of $\ell$.  For any  graded $S$-module $M$, we have
    \[
    \widetilde{M\otimes F} \cong \widetilde{M}\otimes \widetilde{F}
    \]
    as coherent sheaves on $\PPv(\mathbf{a})$.
\end{lemma}

\begin{proof}
When $M$ is free, this follows from \cite[Corollary 4A.5(b)]{BR}. In general, choose a presentation of $M$, and use that tensor products are right exact and the functor $\widetilde{\phantom{M}}$ is~exact. 
\end{proof}



\begin{notation}\label{notation:Phij}
    For each $1\leq j \leq \ell-1$, we write $S(j)^{(\ell)} \ce \bigoplus_{e\in \ZZ} S_{j+e\ell}$, i.e. the degree $\ell$ Veronese submodule of $S(j)$.  Let $\phi_j\colon G'_j\to F'_j$ be a presentation matrix for $S(j)^{(\ell)}$ over the  ring $S^{(\ell)}$, so that $G'_j$ and $F'_j$ are graded, free $S^{(\ell)}$-modules.
Let $\Phi_j: G_j\to F_j$ be the corresponding matrix over the ring $S$, where $G_j$ and $F_j$ are the corresponding free $S$-modules.  In other words: choosing bases of $F'_j$ and $G'_j$, we can represent $\phi_j$ as an $m \times n$ matrix $\bigoplus_{t = 1}^n S^{(\ell)}(a_t) \to  \bigoplus_{s = 1}^m S^{(\ell)}(b_s)$ for some twists $a_1, \dots, a_n, b_1, \dots, b_m \in \Z$. The map~$\Phi_j$ is the same matrix, considered as a map $\bigoplus_{t = 1}^n S^{}(a_t \ell) \to \bigoplus_{s = 1}^m S(b_s\ell)$.
\end{notation} 
\begin{lemma}
\label{lem:sheafify}
Using Notation~\ref{notation:Phij}, we have $\widetilde{\coker(\Phi_j)} \cong \cO_{\PPv(\mathbf{a})}(j)$.
\end{lemma}

\begin{proof}
Let $p_s \in S(j)^{(\ell)}$ be the image of $1 \in S^{(\ell)}(b_s)$ under the map $F_j' \to S(j)^{(\ell)}$ for $1 \le s \le m$. The map $F_j \to S(j)$ that sends $1 \in S(b_s)$ to $p_s$ induces a map $\coker(\Phi_j) \to S(j)$ that is an isomorphism in degrees  $e\ell$ for $e \in \Z$. The result thus follows from \cite[Exercise 5.3.5]{CLS}.
\end{proof}

\begin{prop}\label{prop:tensorProduct}
    Let $M$ be a graded $S$-module, and let $\widetilde{M}$ be the corresponding  sheaf on $\PPv(\mathbf{a})$. 
    Let $j\in \ZZ$, and let $\Phi_j, F_j, G_j$ be as above.  
    Using Notation~\ref{notation:Phij}, set
    $$
    M' \ce \coker(M\otimes \Phi_j) = \coker(M\otimes F_j \overset{\Phi_j}{\gets} M \otimes G_j).
    $$
    There is an isomorphism $\widetilde{M'} \cong \widetilde{M}(j)$ of  sheaves on $\PPv(\mathbf{a})$.
\end{prop}
\begin{proof}
We have:
\begin{align*}
\widetilde{M}\otimes \cO_{\PPv(\mathbf{a})}(j) &= \widetilde{M} \otimes \coker\left(\widetilde{F_j} \overset{\Phi_j}{\longleftarrow}\widetilde{G_j}\right)&\text{Lemma~\ref{lem:sheafify} and exactness of $\widetilde{\phantom{M}}$.}\\
&= \coker\left(\widetilde{M} \otimes \widetilde{F_j} \overset{\Phi_j}{\longleftarrow} \widetilde{M}\otimes \widetilde{G_j}\right)&\text{Right exactness of $\otimes$.}\\
&= \coker\left(\widetilde{M\otimes F_j} \overset{\Phi_j}{\longleftarrow} \widetilde{M\otimes G_j}\right)&\text{Lemma~\ref{lem:tensorFreeMod}.}\\
&= \widetilde{M'}&\text{Exactness of $\widetilde{\phantom{M}}$.}
\end{align*}
\end{proof}

We now present our algorithm for computing sheaf cohomology on the weighted projective variety.

\begin{algorithm}\label{alg:variety}
    Let $M$ be a finitely generated graded $S$-module, and write $\mathcal F = \widetilde{M}$ for the corresponding sheaf on the weighted projective variety $\PPv(\mathbf{a})$. Fix $i, j \in \Z$. To compute $H^i(\PPv(\mathbf{a}), \mathcal F(j))$ we proceed as follows.
\begin{enumerate}
    \item Let $j_0$ be the unique integer $0\leq j_0<\ell$ that is congruent to $j$ modulo $\ell$.   Use the {\tt Macaulay2} package {\tt PushForward} to compute the matrix $\Phi_{j_0}$ as in Notation~\ref{notation:Phij}, and let $M' \ce \coker(M\otimes \Phi_{j_0})$, as in Proposition~\ref{prop:tensorProduct}.
     \item Compute $\reg(M')$, and let $\rho>\reg(M')$. (When $H^0_{\mathfrak m}(M')=0$ we can choose $\rho \geq \reg(M')$.)
    \item  If $j\geq  \rho$, then output $\dim_\kk(M')_j$ when $i=0$, and output $0$ otherwise.
    \item  If $j< \rho$, then output 0 when $\rho \le i + j$. Otherwise, continue to Step~(4).
    \item  Use the {\tt Macaulay2} package {\tt MultigradedBGG} \cite{BBGSTZ} to compute the differential $E$-module $N + \im(\del_{\RR} |_N)$ from Lemma~\ref{lem:weightedFinitePiece}, but applied to the module $M'$ in place of $M$.
    \item  Compute a finite approximation of the minimal free flag resolution $\Tate(\F(j_0))_{> -\rho}$ of the differential $E$-module $N + \im(\del|_N)$ by iterating Algorithm~\ref{alg:resTwisted}  $\rho - i - j$ times, obtaining a free flag differential $E$-module $F$ of the form $F = \bigoplus_{s = -\rho+1}^{-i - j} \Tate(\F(j_0))_s$.
    \item  Output $\dim_\k \underline{\Hom}_E(\kk, F)_{(j-j_0, -i-1)}$
\end{enumerate}
\end{algorithm}
The following example indicates how we can use Algorithm~\ref{alg:variety} to compute the cohomology of a sheaf on $\cF$ on $\PPv(\mathbf{a})$ with respect to some window of degrees $[j_0,j_1]\subseteq \ZZ$.  

\begin{example}\label{ex:backToStackyPoint}
    Let us return to Example~\ref{ex:stackVsVariety}.  Recall that $q$ is the singular point $[0:0:1]$ on $\PPv(1,1,2)$.  We write $M=S/(x_0,x_1)$ for the module such that $\widetilde{M}=\cO_q$. Our goal is to compute the sheaf cohomology $\widetilde{M}$ with respect to all twists in $\ZZ$.

    For the even twists, we compute the Tate resolution of $M$ as in Example~\ref{ex:stackVsVariety}, yielding:
\begin{equation}\label{eqn:Tateeven}
\Tate(\widetilde{M})= \left[
\xymatrixcolsep{5mm}
\xymatrix{
\cdots \ar@/_1pc/[rr]^-{}& 0 & \om_E(2;0) \ar@/_-1pc/[rr]^-{} &0 & \om_E(0;0) \ar@/_1pc/[rr]^-{} &0& \omega_E(-2;0)\ar@/_-1pc/[rr]^-{}&0& \cdots
}
\right].
\end{equation}
By Algorithm~\ref{alg:variety}, this Tate resolution will give the correct sheaf cohomology groups of $\mathcal O_q(j)$ for all twists by even degrees $j$, i.e. the nonzero summands depicted above.  In particular, we see that $H^0(\PPv(1,1,2), \cO_q(2n)) = \kk$ for all $n\in \ZZ$.

For the odd degrees, we need to first apply Proposition~\ref{prop:tensorProduct}.  We have the presentation:
\[
\cO_{\PPv(1,1,2)}(1) \gets \cO_{\PPv(1,1,2)}^2 \overset{\Phi_1}{\gets} \cO_{\PPv(1,1,2)}^2(-2), \quad \text{where} \quad \Phi_1 = \begin{pmatrix}
    x_0^2& -x_0x_1\\ -x_0x_1 & x_1^2
\end{pmatrix}.
\] 
We set $F_1=S^2$ and $G_1=S(-2)^2$, and we view $\Phi_1$ as a map $G_1\to F_1$ as above.  By Proposition~\ref{prop:tensorProduct}, the sheaf on $\PPv(1,1,2)$ associated to $M'=\coker(M\otimes \Phi_1)$ is isomorphic to $\widetilde{M}(1)$.  The Tate resolution of $M'$ is therefore
\begin{equation}\label{eqn:Tateodd}
\Tate(\widetilde{M'})= \left[
\xymatrixcolsep{5mm}
\xymatrix{
\cdots \ar@/_1pc/[rr]^-{}& 0 & \om_E(2;0)^2 \ar@/_-1pc/[rr]^-{} &0 & \om_E(0;0)^2 \ar@/_1pc/[rr]^-{} &0& \omega_E(-2;0)^2\ar@/_-1pc/[rr]^-{}&0& \cdots
}\right],
\end{equation}
and this gives the correct sheaf cohomology groups of $\mathcal O_q(j)$ for all twists by odd degrees $j$, which once again correspond to the nonzero summands above.

We can thus weave together a cohomology table for $\cO_q$ with respect to all twists using \eqref{eqn:Tateeven} and~\eqref{eqn:Tateodd}.  The terms $\omega_E(2e;0)$ from \eqref{eqn:Tateeven} corresponds to the even twists  $H^0(\PPv(1,1,2),\cO_q(2e))$, and the terms $\omega_E(2e;0)^2$ from \eqref{eqn:Tateodd} correspond to the odd twists $H^0(\PPv(1,1,2), \cO_q(2e+1))$. We thus conclude that
\[
H^i(\PPv(1,1,2), \cO_q(j)) = \begin{cases}
    0,& i\ne 0;\\
    \kk, & i=0 \text{ and } j \text{ even};\\
    \kk^2, & i=0 \text{ and } j \text{ odd}.\\
\end{cases}.
\]
\end{example}

\begin{remark}\label{rmk:complexity}
The complexity of Algorithm~\ref{alg:variety} is roughly equivalent to that of Algorithm~\ref{alg:mainweighted}. Specifically: computing the cohomology of $\cF(j)$ on $\PPv(\mathbf{a})$ for $j$ in some window of degrees $[j_0,j_1]\subseteq~\ZZ$ amounts to computing the Tate resolution of $\cF, \cF(1), \cF(2), \dots, \cF(\ell-1)$ and reading off the appropriate Betti numbers from each.  In summary, the added complexity of computing cohomology over the variety versus over the stack is essentially linear, and controlled by the lcm $\ell$.    
\end{remark}

\bibliographystyle{amsalpha}
\bibliography{Bibliography}

\end{document}